\documentclass[12pt, reqno]{amsart}
\usepackage[T1]{fontenc}
\usepackage[utf8]{inputenc}
\usepackage{amsmath,amsthm,amssymb,amsfonts,csquotes}
\usepackage{fullpage}
\usepackage{tikz,float}
\usepackage[pdftex, urlcolor=blue,colorlinks, linkcolor=red,citecolor=blue]{hyperref}
\usepackage{cleveref}
\usepackage{subcaption}
\usepackage{comment}
\usepackage[english]{babel}
\usepackage{xcolor}

\usepackage{todonotes}

\newtheorem{theorem}{Theorem}[section]
\newtheorem{lemma}[theorem]{Lemma}
\newtheorem{claim}[theorem]{Claim}
\newtheorem{corollary}[theorem]{Corollary}
\newtheorem{conj}[theorem]{Conjecture}

\newtheorem{proposition}[theorem]{Proposition}

\newtheorem{definition}[theorem]{Definition}

\newtheorem{remark}[theorem]{Remark}
\newtheorem{example}[theorem]{Example}
\numberwithin{equation}{section}
\newtheorem*{thm}{Theorem}

\newcommand{\ind}{{\rm Ind}}
\newcommand{\con}{{\rm Con}}
\newcommand{\lk}{{\rm lk}}
\newcommand{\del}{{\rm del}}
\newcommand{\Indr}[1]{{\rm Ind}_{#1}}

\def\K{\mathcal{K}}
\def\H{\mathcal{H}}

\title{Chordal graphs, higher independence and vertex decomposable complexes}
\author{Fred M. Abdelmalek}
\address{University of Toronto, Canada}
\email{fred.abdelmalek@mail.utoronto.ca}
\author{Priyavrat Deshpande}
\address{Chennai Mathematical Institute, India}
\email{pdeshpande@cmi.ac.in}
\author{Shuchita Goyal}
\address{Indian Institute of Technology Kanpur, India}
\email{shuckriya.goyal@gmail.com}
\author{Amit Roy}
\address{School of Mathematical Sciences, National Institute of Science Education and Research, Bhubaneswar, 752050, India $\&$ \newline \hspace*{0.43cm}Homi Bhabha National Institute, Training School Complex, Anushaktinagar, Mumbai 400094, India}
\email{amitiisermohali493@gmail.com}
\author{Anurag Singh}
\address{Indian Institute of Technology Bhilai, India}
\email{anurags@iitbhilai.ac.in}

\hbadness=99999
\hfuzz=9999pt

\begin{document}
\begin{abstract}
Given a finite simple undirected graph $G$ there is a simplicial complex $\mathrm{Ind}(G)$, called the independence complex, whose faces correspond to the independent sets of $G$. 
This is a well studied concept because it provides a fertile ground for interactions between commutative algebra, graph theory and algebraic topology. 
In this article we consider a generalization of independence complex. 
Given $r\geq 1$, a subset of the vertex set is called $r$-independent if the connected components of the induced subgraph have cardinality at most $r$. 
The collection of all $r$-independent subsets of $G$ form a simplicial complex called the $r$-independence complex and is denoted by $\mathrm{Ind}_r(G)$. 
It is known that when $G$ is a chordal graph the complex $\mathrm{Ind}_r(G)$ has the homotopy type of a wedge of spheres. 
Hence it is natural to ask which of these complexes are shellable or even vertex decomposable. We prove, using Woodroofe's chordal hypergraph notion, that these complexes are always shellable when the underlying chordal graph is a tree.
Using the notion of vertex splittable ideals we show that for caterpillar graphs the associated $r$-independence complex is vertex decomposable for all values of $r$. 
Further, for any $r\geq 2$ we construct chordal graphs on $2r+2$ vertices such that their $r$-independence complexes are not sequentially Cohen-Macaulay.

\end{abstract}
\keywords{higher independence complex, chordal graphs, shellable complexes, vertex decomposable complexes, hypergraphs, Stanley-Reisner ideal}
\subjclass[2020]{05E45, 13F55}
\maketitle

\section{Introduction}

Let $G$ be a (simple) graph with the vertex set $V(G)$ and the edge set $E(G)$. For a subset $U$ of $V(G)$, the induced subgraph $G[U]$ is the subgraph of $G$ with vertices $V(G[U]) = U$ and edges $E(G[U]) = \{(a, b) \in E(G) \ | \ a, b \in U\}$. For $r\geq 1$, a subset $A\subseteq V(G)$ is called {\it r -independent} if connected
components of the induced subgraph $G[A]$ have cardinality (number of vertices) at most $r$.

The {\it independence complex} of a graph $G$, denoted $\ind(G)$, is a simplicial complex whose simplices are all $1$-independent subsets of $G$. 
The topological study of the independence complexes of graphs has received a lot of attention in last two decades. 
For instance, in Babson and Kozlov’s proof of Lov{\'a}sz's conjecture \cite{BK07} regarding odd
cycles and graph homomorphism complexes, the independence complexes of cycle graphs played an important role. Also, Meshulam \cite{Meshulam03} gave a connection between the domination number of a graph $G$ and homological connectivity of $\ind(G)$.
Properties of independence complexes have also been used to study the Tverberg graphs \cite{Eng11} and the independent  system of representatives \cite{abz07}.

The independence complexes have also been studied in combinatorial commutative algebra in connection with the edge ideals of graphs. More precisely, for a graph $G$ on $n$ number of vertices, the {\it Stanley-Reisner ideal} of $\operatorname{Ind}(G)$ is the edge ideal $I(G)$ in the polynomial ring with $n$ variables over a field $\mathbb K$. Various algebraic and combinatorial properties of $I(G)$ have been studied recently. 
For a chordal graph $G$, Herzog, Hibi and Zheng \cite{HHZ06} proved that $\ind(G)$ is Cohen-Macaulay if and only if $I(G)$ is unmixed. In this context Francisco and Van Tuyl \cite{FV07} showed that the ideal $I(G)$ is sequentially Cohen-Macaulay whenever $G$ is chordal.
Later, Engstr\"om and Dochtermann \cite{DE09}, and Woodroofe \cite{Woodroofe09}  generalized this result by showing that for a chordal graph $G$, $\operatorname{Ind}(G)$ is vertex decomposable and hence $I(G)$ is sequentially Cohen-Macaulay.

The aim of the article is to study the simplicial complex that arises in the context of $r$-independent sets and naturally generalizes independence complex. 
We begin by a formal definition. 

\begin{definition}
\normalfont The {\it $r$-independence complex} of a graph $G$, denoted $\ind_r(G)$, is the simplicial complex whose vertices are the vertices of $G$ and simplices are all $r$-independent subsets of $G$.
\end{definition}

 See \Cref{fig:example of ind complex} for an example. The $1$-independence complex of $G$ consists of $3$ maximal simplices, namely $\{v_2,v_3,v_4\}$, $\{v_3,v_4,v_5\}$ and $\{v_1,v_5\}$. 
The complex $\ind_2(G)$ consists of $4$ maximal simplices, namely $\{v_1,v_2\},  \{v_1,v_3,v_5\}, \{v_1,v_4,v_5\}$ and $\{v_2,v_3,v_4,v_5\}$.

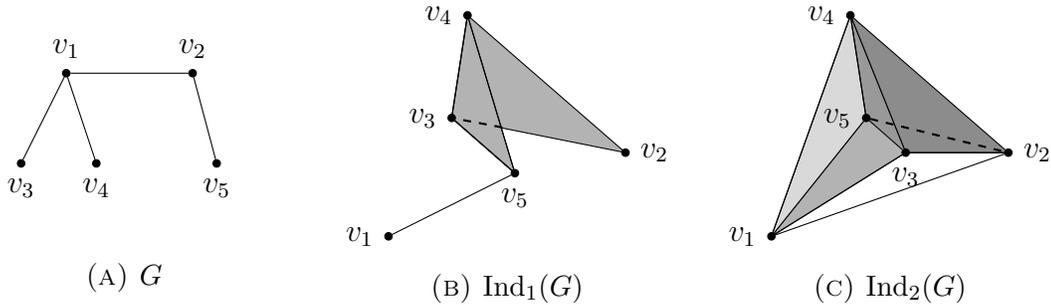
\begin{figure}[!ht]
	\begin{subfigure}[]{0.30 \textwidth}
		\centering
		\vspace{0.3cm}
		\begin{tikzpicture}
 [scale=0.4, vertices/.style={draw, fill=black, circle, inner sep=1.0pt}]
        \node[vertices, label=above: {$v_1$}] (v1) at (0,3)  {};
		\node[vertices, label=above: {$v_2$}] (v2) at (4.2,3)  {};
		\node[vertices, label=below:
		{$v_3$}] (l11) at (-1.5,0)  {};
		\node[vertices, label=below:
		{$v_4$}] (l12) at (1,0)  {};
		\node[vertices, label=below:
		{$v_5$}] (l21) at (5,0)  {};
		
\foreach \to/\from in {v1/v2}
\path (v1) edge node[pos=0.5,below] {} (v2);
\path (v1) edge node[pos=0.5,left] {} (l11);
\path (v1) edge node[pos=0.5,left] {} (l12);
\path (v2) edge node[pos=0.5,left] {} (l21);
\end{tikzpicture}
\vspace{0.5cm}\caption{$G$}\label{fig:G221}
	\end{subfigure}
	\begin{subfigure}[]{0.30 \textwidth}
		\centering
	\begin{tikzpicture}
 [scale=0.21, vertices/.style={draw, fill=black, circle, inner sep=1.0pt}]

\filldraw[fill=gray!60] (15,5.3)--(5,14)--(4,7.5)--cycle;
\filldraw[fill=gray!60] (8,4)--(4,7.5)--(5,14)--cycle;
\node[vertices, label=left:{$v_1$}] (a) at (0,0) {};
\node[vertices, label=left:{$v_4$}] (b) at (5,14) {};
\node[vertices, label=below:{$v_5$}] (c) at (8,4) {};
\node[vertices, label=right:{$v_2$}] (d) at (15,5.3) {};
\node[vertices, label=left:{$v_3$}] (e) at (4,7.5) {};
\foreach \to/\from in {a/c,e/c,e/e,c/e,c/b,b/e}
\draw [-] (\to)--(\from);
\draw [thick,dashed] (7.2,6.9)--(e);
\end{tikzpicture}\caption{$\ind_1(G)$}
	\end{subfigure}
	\begin{subfigure}[]{0.30 \textwidth}
		\centering
	\begin{tikzpicture}
 [scale=0.21, vertices/.style={draw, fill=black, circle, inner sep=1.0pt}]

\fill[fill=gray!80] (15,5.3)--(6,7.5)--(8.5,5.3);
\filldraw[fill=gray!80] (15,5.3)--(6,7.5)--(5,14);
\filldraw[fill=gray!90] (15,5.3)--(8.5,5.3)--(5,14);
\fill[fill=gray!80] (5,14)--(6,7.5)--(8.5,5.3);
\filldraw[fill=gray!60] (8.5,5.3)--(0,0)--(6,7.5);
\filldraw[fill=gray!30] (6,7.5)--(5,14)--(0,0);

\node[vertices, label=left:{$v_1$}] (a) at (0,0) {};
\node[vertices, label=left:{$v_4$}] (c) at (5,14) {};
\node[vertices, label=below:{$v_3$}] (f) at (8.5,5.3) {};
\node[vertices, label=right:{$v_2$}] (e) at (15,5.3) {};
\node[vertices, label=left:{$v_5$}] (b) at (6,7.5) {};
\foreach \to/\from in {a/b,a/f,a/c,b/c,f/c,b/f,e/f,e/c,a/e}
\draw [-] (\to)--(\from);
\draw [thick,dashed] (b)--(e);
\end{tikzpicture}\caption{$\ind_2(G)$}
	\end{subfigure}
	\caption{Example of higher independence complexes} \label{fig:example of ind complex}
\end{figure}

These complexes have appeared in the work of Szab{\'o} and Tardos \cite{ST06} and also in the work of Paolini and Salvetti \cite{PS18}. 
The authors, in \cite{DS21}, proved that the $r$-independence complexes are homotopy equivalent to a wedge of spheres when the graph in question is either a cycle graph or a perfect $m$-ary tree.  
Extending Meshulam's result \cite{Meshulam03}, it was proved in \cite{DSS20} that the (homological) connectivity of $r$-independence complexes of graphs give an upper bound for the distance $r$-domination number of graphs. 
In the same paper the authors also proved that the $r$-independence complexes of chordal graphs are homotopy equivalent to a wedge of spheres for each $r\geq 1$. 

The next obvious direction in the study of theses complexes is to determine for which graph classes and for what values of $r$ the $r$-independence complex is sequentially Cohen-Macaulay (\Cref{def:scm})?
Due to the seminal work of Reisner \cite[Theorem 6.3.12]{RV1} we know that the topological notion of sequentially Cohen-Macaulay is equivalent to the classical, commutative algebra notion of sequentially Cohen-Macaulay rings. Recall that the Stanley-Reisner ring of a simplicial complex $\K$ is the quotient of the polynomial ring in $|V(\K)|$ variables by the square-free monomial ideal generated by (minimal) non-faces of $\K$.  

The notion of shellable complexes is stronger than sequentially Cohen-Macaulay complexes. 
Even stronger notion is that of a \emph{vertex decomposable} complex, it implies shellability and either of these conditions mean that the simplicial complex under consideration has the homotopy type of a wedge of spheres. 
As stated before, for several graph classes their $r$-independence complexes have the homotopy type of a wedge of spheres.
Moreover, if the graph $G$ has $r+1$ vertices then $\ind_r(G)$ is the boundary of a simplex, hence vertex decomposable. 

Recall that if $G$ is a chordal graph then $\ind_1(G)$ is vertex decomposable \cite{DE09, Woodroofe09}. 
It is therefore natural to ask whether for every value of $r$, is the $r$-independence complex of a chordal graph vertex decomposable?
The answer to this question is no; as we show in \cref{sec:concluding}, there are some chordal graphs such that one of their $r$-independence complexes is not sequentially Cohen-Macaulay.

Using Woodroofe's notion of chordal hypergraphs, we show that the $r$-independence complexes associated to trees are always shellable. 

\begin{thm}[\Cref{cor:higherindshellable}]
For any tree $T$ and $r \geq 1$, the complex $\ind_r(T)$ is shellable.
\end{thm}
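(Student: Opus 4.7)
The plan is to realise $\ind_r(T)$ as the independence complex of a clutter and then invoke Woodroofe's result that every chordal clutter has a shellable independence complex. A subset $S \subseteq V(T)$ is a non-face of $\ind_r(T)$ exactly when $T[S]$ has some connected component of size at least $r+1$, so the inclusion-minimal non-faces are precisely the vertex sets of subtrees of $T$ with exactly $r+1$ vertices. Let $\mathcal{H}_r(T)$ denote the $(r+1)$-uniform clutter whose edges are these $(r+1)$-vertex subtrees; then $\ind_r(T) = \ind(\mathcal{H}_r(T))$, and it suffices to show that $\mathcal{H}_r(T)$ is chordal in Woodroofe's sense. I would do this by exhibiting a simplicial elimination ordering obtained from iteratively pruning leaves of $T$.

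The central lemma is that every leaf $v$ of $T$ is a simplicial vertex of $\mathcal{H}_r(T)$, in the sense that for any two distinct edges $e_1, e_2 \in \mathcal{H}_r(T)$ containing $v$, there is an edge $e_3 \in \mathcal{H}_r(T)$ with $e_3 \subseteq (e_1 \cup e_2) \setminus \{v\}$. To prove this, let $u$ be the unique neighbour of $v$ in $T$. Any $(r+1)$-vertex subtree containing $v$ must contain $u$, so one can write $e_i = \{v\} \cup S_i$ with $S_i$ an $r$-vertex subtree of $T$ passing through $u$. Since both $S_1$ and $S_2$ contain $u$, their union is again a subtree of $T$; because $S_1 \neq S_2$, this union has at least $r+1$ vertices, and any $(r+1)$-vertex subtree inside $S_1 \cup S_2$ provides the required $e_3$.

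After deleting a leaf $v$, the edges of the remaining clutter are exactly the $(r+1)$-vertex subtrees of $T$ that avoid $v$, i.e.\ the $(r+1)$-vertex subtrees of $T - v$; hence the deletion is $\mathcal{H}_r(T-v)$. Because $T - v$ is again a tree with one fewer vertex, the argument iterates, and induction on $|V(T)|$ produces a simplicial elimination ordering of $\mathcal{H}_r(T)$. The base case $|V(T)| \leq r$ is trivial, since then $\mathcal{H}_r(T)$ has no edges and $\ind_r(T)$ is a full simplex. By Woodroofe's theorem, chordality of $\mathcal{H}_r(T)$ implies that $\ind(\mathcal{H}_r(T)) = \ind_r(T)$ is shellable.

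The main obstacle is not any single computation but correctly aligning the setup with Woodroofe's framework: verifying that $\mathcal{H}_r(T)$ is genuinely a clutter (automatic from $(r+1)$-uniformity), that clutter-deletion of a leaf $v$ coincides with forming $\mathcal{H}_r(T - v)$, and that Woodroofe's notion of a simplicial elimination ordering really does certify chordality of the clutter and hence shellability of the associated independence complex. Once those bookkeeping checks are in place, the leaf-simplicial claim above is essentially the entire proof.
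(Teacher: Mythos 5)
Your overall strategy coincides with the paper's: identify $\ind_r(T)$ with the independence complex of the $(r+1)$-uniform clutter $\con_r(T)$ whose edges are the $(r+1)$-vertex subtrees, and invoke Woodroofe's theorem that chordal clutters have shellable independence complexes. Your central lemma --- every leaf $v$ of $T$ is simplicial in $\con_r(T)$, since any two edges through $v$ contain its unique neighbour $u$, so their union minus $v$ is connected with at least $r+1$ vertices and hence contains an $(r+1)$-vertex subtree --- is correct as stated for the clutter $\con_r(T)$ itself.

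The gap is in the chordality verification. Woodroofe's chordality requires that \emph{every minor} of $\con_r(T)$ have a simplicial vertex, where minors are generated by deletions \emph{and contractions}; this is not negotiable bookkeeping, because the inductive proof of his shellability theorem descends to $\lk(v)=\ind\bigl(\con_r(T)/v\bigr)$, i.e.\ to a contraction. Your elimination ordering tracks only deletions: you check that deleting a leaf $v$ yields $\con_r(T-v)$ and induct on $|V(T)|$, but you never examine minors in which vertices have been contracted. Contracting a leaf $v$ with neighbour $u$, for example, produces (after passing to minimal edges) a non-uniform clutter whose edges are the $r$-vertex subtrees through $u$ together with the $(r+1)$-vertex subtrees avoiding $u$; this is not $\con_r(T')$ for any tree $T'$, so your induction does not reach it, and a general minor need not contain any leaf of $T$ at all. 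This is exactly the difficulty that the paper's proof of \Cref{thm:mainshellable} is built to handle: for an arbitrary minor $H$ it chooses the candidate vertex $\tilde v$ from the first layer of the iterated leaf-removal filtration $V_1,V_2,\dots$ that still meets $V(H)$, lifts edges $e_1,e_2$ of $H$ back to edges $f_1,f_2$ of $\con_r(T)$ by reinstating the contracted vertices, and uses a shortest-path argument to produce an $e_3\in E(H)$ with $e_3\subseteq(e_1\cup e_2)\setminus\{\tilde v\}$. Your leaf lemma is the special case $H=\con_r(T)$; to complete the argument you must either extend it to all minors along these lines or supply a citable statement that a deletion-only simplicial elimination ordering already certifies chordality --- Woodroofe's definition does not provide one, and the paper does not use one.
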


Recall that, vertex decomposability is defined recursively in terms of link and deletion of a vertex (see \Cref{vddef}). 
In case of higher independence complexes the link of any vertex is usually quite complicated and lacks in obvious pattern. Hence, with the aid of tools from commutative algebra we prove the following.

\begin{thm}[Theorem \ref{main theorem *}]
For $r\geq 1,$ the $r$-independence complexes of caterpillar graphs are vertex decomposable. 
\end{thm}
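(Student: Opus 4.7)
The strategy is to use the theory of vertex splittable ideals of Moradi and Khosh-Ahang: since vertex splittability of the Stanley--Reisner ideal implies vertex decomposability of the associated complex, it suffices to show that for any caterpillar $G$ and $r\ge 1$, the ideal
\[
I(\ind_r(G)) \;=\; \Bigl\langle \prod_{v\in T}x_v \;:\; T\subseteq V(G),\ G[T]\text{ is connected with }|T|=r+1\Bigr\rangle
\]
is vertex splittable. For a tree $G$, these generators are in bijection with the subtrees of $G$ on exactly $r+1$ vertices.

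I would proceed by induction on $|V(G)|$. The base case $|V(G)|\le r+1$ is immediate, since $\ind_r(G)$ is either a full simplex or the boundary of one; both Stanley--Reisner ideals are vertex splittable essentially by inspection. For the inductive step, let the spine of $G$ be $s_1,\ldots,s_k$ with legs $\ell_{i,1},\ldots,\ell_{i,t_i}$ attached to $s_i$. I would take the shedding vertex $v$ at an extreme of the spine --- either $v=s_1$ when $s_1$ carries no legs and is itself a leaf of $G$, or $v=\ell_{1,1}$ a leg attached to $s_1$ otherwise --- and decompose
\[
I(\ind_r(G)) \;=\; x_v\, I_1 \,+\, I_2,
\]
where $I_1 = I(\ind_r(G)) : x_v$ and $I_2$ is the contraction of $I(\ind_r(G))$ to the polynomial ring omitting the variable $x_v$.

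Two verifications remain: that $I_1$ and $I_2$ are themselves vertex splittable, and that $I_2 \subseteq I_1$. For the former, the plan is to identify $I_2$ with the Stanley--Reisner ideal of $\ind_r(G-v)$, where $G-v$ is a smaller caterpillar (modulo an isolated vertex that may appear when $v=s_1$, handled by cone invariance of vertex splittability), and to identify $I_1$ with the Stanley--Reisner ideal of a reduced higher-independence complex on a smaller caterpillar or a slightly decorated variant; both then fall under the inductive hypothesis.

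The hard part is the containment $I_2 \subseteq I_1$, which is the algebraic counterpart of the shedding condition at $v$: every $(r+1)$-subtree of $G$ that avoids $v$ must contain some $r$-subtree whose union with $\{v\}$ is an $(r+1)$-subtree of $G$. For an arbitrary leaf of a general graph this easily fails (consider a very long path), so the extremal placement of $v$ at the spine's end must be combined with the caterpillar's thin structure to route every such $(r+1)$-subtree through a short spine segment near $v$. I expect the cleanest route to require strengthening the inductive hypothesis so that $I_1$ is recognised as the higher-independence ideal of a caterpillar in which one distinguished vertex (the unique spine-neighbour of $v$) carries a reduced capacity parameter --- turning the shedding check into a local combinatorial statement at the spine endpoint that the same induction then handles uniformly.
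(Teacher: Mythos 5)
Your reduction starts from a misstatement of the Moradi--Khosh-Ahang criterion, and this is fatal to the whole plan. Their Theorem 2.3 says that $\K$ is vertex decomposable if and only if the \emph{Alexander dual} $(\mathcal{SR}(\K))^{\vee}$ is vertex splittable --- not the Stanley--Reisner ideal itself. The ideal you propose to split, generated by $\prod_{v\in T}x_v$ over connected $(r+1)$-subsets $T$, is exactly $\mathcal{SR}(\ind_r(G))$ (the hyper-edge ideal of $\con_r(G)$); showing \emph{it} is vertex splittable would establish vertex decomposability of the dual complex $\ind_r(G)^{\vee}$, which is a different statement. The correct object is $(\mathcal{SR}(\ind_r(G)))^{\vee}$, whose minimal generators correspond to the \emph{minimal vertex covers} of $\con_r(G)$, and the entire combinatorial analysis has to be carried out on vertex covers rather than on $(r+1)$-subtrees. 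That is what the paper does: it partitions the minimal vertex covers according to the first vertex they contain from the initial block of $r+1$ vertices $\{\alpha_1,\dots,\alpha_t,\beta_{i,j}\}$ of the caterpillar, writes the dual ideal as $\sum_i x_{\alpha_i}J_i+\sum_{i,k}x_{\beta_{i,k}}J_{i,k}$, proves a chain of containments among the $J$'s, and identifies each $J_i$, $J_{i,k}$ with the analogous ideal of a smaller caterpillar to close the induction.

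Even setting the duality issue aside, the decomposition you propose does not go through: the containment $I_2\subseteq I_1$ fails for the very first example. Take $G=P_n$ with $v$ the endpoint $1$ and $r=2$: the generators of $I_1$ coming from $(r+1)$-subtrees through $v$ reduce to $x_2x_3$ alone, while $I_2$ contains $x_3x_4x_5$, which is not divisible by $x_2x_3$. So an extremal leaf is not a valid splitting vertex for the edge ideal of $\con_r(P_n)$ (a central vertex works there, but for different reasons), and your proposed fix --- a ``reduced capacity parameter'' on the spine-neighbour of $v$ --- is not formulated precisely enough to assess. In short, the architecture you sketch (induction on the caterpillar, peel from one end, verify a containment) does echo the paper's strategy in spirit, but it is applied to the wrong ideal and with a splitting that is false as stated; to repair it you would need to pass to the Alexander dual and redo the combinatorics in terms of minimal vertex covers of $\con_r(G)$.
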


Finally, in \Cref{sec:concluding}, we give examples of chordal graphs for which the behaviour of higher independence complex differs from that of $1$-independence complex.  For instance, when $r\geq 2$, we construct chordal graphs $H_r$ and $G_r$, both on $2r+2$ vertices, such that $\mathrm{Ind}_{r+1}(H_{r})$ is homotopy equivalent to a wedge of $r$-dimensional spheres and $\ind_{r}(G_r)$ is contractible but neither of them is sequentially Cohen-Macaulay.

\section{Shellable higher independence complexes}\label{sec:shellable}

 The elements of a simplicial complex $\K$ are called {\it faces} (or {\itshape simplices}) of $\K$.  If $F \in \K$ and $|F |=k+1$, then $F$ is said to be {\it $k$-dimensional}. The \emph{dimension} of $\K$, denoted dim$(\K)$, is the maximum of the dimensions of its faces. The set of $0$-dimensional simplices of $\K$ is denoted by $V(\K)$, and its elements are called {\it vertices} of $\K$. The maximal faces with respect to inclusion are called the {\it facets} of $\K$. The complex $\K$ is called {\it pure} if all its facets are of the same dimension. A {\it subcomplex} of a simplicial complex $\K$ is a simplicial complex whose simplices are contained in $\K$. For a set $F$, let $\Delta^F$ denotes a simplicial complex whose faces are all subsets of $F$.

    \begin{definition}\label{def:shellability}
        \normalfont A simplicial complex $\K$ is called \emph{shellable} if the facets of $\K$ can be arranged in linear order $F_1, F_2, \dots ,F_t$ in such a way that the subcomplex $\big(\bigcup\limits_{1\leq j <r}\Delta^{F_j}\big) \cap \Delta^{F_r}$ is pure and
        $(\text{dim}(\Delta^{F_k}) -1)$-dimensional for all $k = 2,\dots,t$. Such an ordering of facets is called a
        {\it shelling order} of $\K$.
\end{definition}

The main aim of this section is to show that the $r$-independence complexes of chordal graphs are shellable. To do so, we associate a special hypergraph $\con_r(G)$ to a graph $G$ such that the independence complex of $\con_r(G)$ is same as the $r$-independence complex of $G$.  We then use Woodroofe's \cite{Woodroofe11} notion of chordality of a hypergraph to prove our result. We start by introducing the required terminology of hypergraphs and their independence complexes.

	A {\it hypergraph} $\H$ is a pair $(V(\H), E(\H))$ consisting of a set $V(\H)$ along with a subset $E(\H)$ of $2^{V(\H)}$. A hypergraph is said to be {\it simple} if for any $e \in E(\H)$, there is no $e' \in E(\H)$ satisfying $e' \subsetneq e$.
	A subset $S$ of $V(\H)$ is called {\it independent} if for no $e \in E(\H)$, $e \subset S$. The {\it independence complex of a hypergraph} $\H$, denoted by $\ind(\H)$, is a simplicial complex whose simplices are formed by independent subsets of $V(\H)$.

	Let $\H$ be a hypergraph and $v \in V(\H)$. The {\it deletion of vertex } $v $ is a hypergraph, denoted by $\H \setminus v$, with $V(\H \setminus v) = V(\H) \setminus \{v\}$ and $E(\H \setminus v) = \{e \in E(\H): v  \notin e\}$. 
	The {\it contraction of a vertex} $v$ is a hypergraph, denoted by $\H/ v$, with $V(\H / v) = V(\H) \setminus \{v\}$ and $E(\H / v) = \{e \setminus \{v\}: e \in E(\H)\}$. 
	A {\it minor} of a hypergraph $\H$ is a hypergraph obtained from $\H$ via a sequence of contractions and deletions.
	We note that the contraction of a vertex in a  simple hypergraph need not yield a simple hypergraph, and hence a minor of a simple hypergraph may not be simple. Since we deal with independence complex, it suffices to consider only the simple hypergraphs. In lieu of this, whenever the minor of a simple hypergraph is not simple, we consider the underlying simple hypergraph of the given minor.
    
    \begin{definition}\rm{\cite[Definition 4.2]{Woodroofe11}}
    {\normalfont
        In a hypergraph $\H$, a vertex $v \in V(\H)$ is called a {\it simplicial vertex} if for any two edges $e_1, e_2 \in E(\H)$ that contains $v$, there is an $e_3 \in E(\H)$ such that $e_3 \subset (e_1 \cup e_2) \setminus \{v\}$.
    }
    \end{definition}
    
    \begin{definition}\rm{\cite[Definition 4.3]{Woodroofe11}}\label{def:simplicial vertex in chordal graph}
    {\normalfont
        A hypergraph $\H$ is {\it chordal} if every minor of $\H$ has a simplicial vertex.}
    \end{definition}
    
    We note that for a graph $G$, its edge set $E(G)$ can be regarded as a subset of $ 2^{V(G)}$ such that for any $e \in E(G), |e| = 2$. Observe that any minor of a graph is an induced subgraph of the graph and vice-versa. So chordal graphs are also examples of chordal hypergraphs. 
    Therefore the above definition of chordal hypergraphs generalizes the notion of chordality of graphs.

    \begin{theorem}{\rm{\cite[Theorem 1.1]{Woodroofe11}}}\label{thm:woodroofe shellable}
	    If $\H$ is a chordal hypergraph, then the independence complex $\ind(\H)$ is shellable.
	\end{theorem}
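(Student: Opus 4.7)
The plan is to induct on $|V(\H)|$. The base case of a hypergraph on at most one vertex is immediate. For the inductive step, use the chordality hypothesis to select a simplicial vertex $v\in V(\H)$, and observe the identifications
\[
\ind(\H\setminus v)=\del_v(\ind(\H)) \quad\text{and}\quad \ind(\H/v)=\lk_v(\ind(\H)),
\]
both of which follow directly from the definitions of hypergraph independence, deletion, and contraction. Because the class of chordal hypergraphs is closed under taking minors (by the very definition, every minor of a chordal hypergraph has a simplicial vertex, hence is chordal), both $\H\setminus v$ and $\H/v$ are chordal, and the inductive hypothesis yields shellings $F_1,\dots,F_t$ and $G_1,\dots,G_s$ of $\del_v(\ind(\H))$ and $\lk_v(\ind(\H))$ respectively.

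The intended assembly is to list first the facets $G_1\cup\{v\},\dots,G_s\cup\{v\}$ obtained by coning the shelling of $\lk_v(\ind(\H))$ with $v$, then append the $F_j$ that remain facets of $\ind(\H)$, in the same relative order. Within the first block the shelling condition is preserved under join with the apex $\{v\}$. For the second block, the intersection of each $F_j$ with the union of preceding facets decomposes into the corresponding intersection coming from the inductive shelling of $\del_v(\ind(\H))$ together with the intersections $F_j\cap(G_i\cup\{v\})$; the latter lie inside $\lk_v(\ind(\H))\subseteq\del_v(\ind(\H))$, which is exactly what is needed to match dimensions and purity against the inductive shelling of the deletion. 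The critical structural input for this matching is the \emph{shedding property}: every facet of $\del_v(\ind(\H))$ is still a facet of $\ind(\H)$, equivalently, for every facet $F$ of $\ind(\H\setminus v)$ there exists an edge $e\in E(\H)$ with $v\in e$ and $e\setminus\{v\}\subseteq F$.

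The main obstacle is establishing this shedding property from only the two-edge simplicial condition on $v$. The simplicial property is a pairwise statement about edges through $v$, while shedding is a global statement about every facet of the deletion; bootstrapping between them is the heart of the argument. The proposed route is by contradiction: suppose $F$ is a facet of $\ind(\H\setminus v)$ with $F\cup\{v\}\in\ind(\H)$, so every edge through $v$ is missing at least one vertex of $F$. Maximality of $F$ in $\ind(\H\setminus v)$ forces, for each $u\in V(\H)\setminus(F\cup\{v\})$, an edge of $\H\setminus v$ sitting inside $F\cup\{u\}$, and iterating the simplicial condition on pairs of edges through $v$ produces additional edges of $\H\setminus v$ inside unions $(e_i\cup e_j)\setminus\{v\}$; a careful combinatorial argument then locates an edge fully inside $F$, contradicting independence. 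When the chosen simplicial vertex does not directly witness shedding, the fallback is to invoke the chordality of the minors iteratively, choosing a new simplicial vertex in $\H\setminus v$ or $\H/v$ and reorganising the concatenation, analogously to how perfect elimination orderings of chordal graphs are exploited in the classical result of Woodroofe for graphs.
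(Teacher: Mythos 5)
This statement is not proved in the paper at all: it is imported verbatim from Woodroofe \cite[Theorem 1.1]{Woodroofe11} and used as a black box, so there is no internal proof to compare against and your attempt has to stand on its own. The scaffolding you set up is fine: the identifications $\ind(\H\setminus v)=\del_{\ind(\H)}(v)$ and $\ind(\H/v)=\lk_{\ind(\H)}(v)$ are correct, chordality is minor-closed by definition, and coning a shelling of the link with $v$ does preserve the shelling condition within that block. The argument collapses, however, at exactly the point you flag as the ``main obstacle'': the shedding property for a simplicial vertex is not merely hard to extract from the two-edge condition --- it is false.

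Concretely, take $G$ to be the bull graph: a triangle on $\{v,a,b\}$ with pendant vertices $x$ adjacent to $a$ and $y$ adjacent to $b$, regarded as a simple hypergraph with edges of size $2$ (so $\ind(\H)=\ind_1(G)$). The vertex $v$ is simplicial: the two edges through it are $\{v,a\}$ and $\{v,b\}$, and $e_3=\{a,b\}$ witnesses the condition. Yet $F=\{x,y\}$ is a facet of $\ind(\H\setminus v)$ that is not a facet of $\ind(\H)$, because $\{v,x,y\}$ is independent; no iteration of the simplicial condition can produce an edge inside $F$, since no such edge exists. (The leaf of a path $P_3$ or $P_4$ gives an even smaller failure.) This reflects the classical subtlety that in the Dochtermann--Engstr\"om/Woodroofe proof for chordal \emph{graphs} the shedding vertex is a \emph{neighbour} of a simplicial vertex, never the simplicial vertex itself. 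Two further signs that the route overreaches: (i) if your bootstrap worked, the same induction run with vertex decomposability as the inductive hypothesis would show every chordal hypergraph has a vertex decomposable independence complex --- strictly stronger than Woodroofe's theorem, and strong enough to settle \Cref{con1}, which this paper can only verify for caterpillars by an entirely different vertex-splittable argument; (ii) even granting shedding, your concatenation is in the wrong order: the order that works lists the shelling of $\del_{\ind(\H)}(v)$ first and the cone $\{v\}*\lk_{\ind(\H)}(v)$ last, whereas a deletion-facet $F$ listed after the cone can meet $\{v\}*\lk_{\ind(\H)}(v)$ in faces of codimension at least $2$ in $F$ not contained in any earlier codimension-one face, breaking purity. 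The closing ``fallback'' of re-choosing simplicial vertices in the minors is not an argument; Woodroofe's actual proof constructs the shelling order by a more delicate induction that deliberately avoids passing through a shedding vertex, and your proposal does not supply a substitute for that step.
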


   \begin{definition}\label{def:conrG}
        Let $G$ be a graph and $r \ge 1$. We define a hypergraph $\con_r(G)$ associated to $G$ with $V(\con_r(G)) = V(G)$ and the edge set of $\con_r(G)$ is the collection of all subsets $S\subseteq V(G)$ such that $|S|=r+1$ and the induced subgraph $G[S]$ is connected.
   \end{definition} 

	\begin{proposition}
 		For any graph $G$, $\ind (\con_{r}(G)) = \ind_r(G)$.
	\end{proposition}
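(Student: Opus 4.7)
The goal is to show that the two simplicial complexes have the same face set, so the proof reduces to checking that a subset $A\subseteq V(G)$ is $r$-independent in $G$ if and only if $A$ is independent in the hypergraph $\con_r(G)$, i.e., contains no edge of $\con_r(G)$. I would prove the two implications separately.

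For the forward direction, suppose $A$ is $r$-independent, so every connected component of $G[A]$ has at most $r$ vertices. If $S\subseteq A$ with $|S|=r+1$ and $G[S]$ connected, then $S$ is contained in a single connected component of $G[A]$ (connectivity of $G[S]$ forces all its vertices to lie in one component), which would then have at least $r+1$ vertices, a contradiction. Hence $A$ contains no edge of $\con_r(G)$.

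For the converse, suppose $A$ is not $r$-independent, so some connected component $C$ of $G[A]$ has $|C|\geq r+1$. I need to extract from $C$ a subset $S$ of size exactly $r+1$ with $G[S]$ connected. The standard way is to take any spanning tree $T$ of $G[C]$ and iteratively remove a leaf of $T$ until only $r+1$ vertices remain; the resulting vertex set $S\subseteq C\subseteq A$ spans a subtree of $T$, so $G[S]$ is connected and $|S|=r+1$, giving an edge of $\con_r(G)$ inside $A$.

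The argument is essentially bookkeeping, so there is no serious obstacle. The only substantive point is the leaf-pruning step in the converse; I would state it as a short lemma-style remark so the reader sees that $r+1$ is achievable as the size of an induced connected subgraph of $C$. Combining both directions shows $A\in \ind_r(G)$ iff $A\in \ind(\con_r(G))$, proving the equality of complexes.
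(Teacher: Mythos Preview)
Your proof is correct and follows the same approach as the paper: both unwind the definitions to show that $A$ is $r$-independent in $G$ if and only if $A$ contains no edge of $\con_r(G)$. Your version is in fact more careful than the paper's, since you make explicit the leaf-pruning step needed for the converse (extracting a connected $(r+1)$-subset from a large component), which the paper glosses over.
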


    \begin{proof}
            
        By definition of the independence complex of hypergraphs, $\sigma \in \ind(\con_r(G))$ if and only if $\sigma \cap E(\con_r(G))=\emptyset$, {\it i.e.}, the induced subgraph $G[\sigma]$ has no connected component of cardinality $r+1$. Therefore by definition of $\con_r(G)$ and $\ind_r(G)$, $\sigma \in \ind(\con_r(G))$ if and only if $\sigma \in \ind_r(G)$.
    \end{proof}
    
We now prove the following result which would imply the main result of this section.
	\begin{theorem}\label{thm:mainshellable}
For any tree $G$, the hypergraph $\con_r(G)$ is chordal.
	\end{theorem}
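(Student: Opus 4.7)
The approach will be to show that every minor $\H$ of $\con_r(G)$ contains a simplicial vertex. Since in hypergraphs deletion and contraction of distinct vertices commute, any minor can be rewritten in the form $\H = \con_r(F)/C$, where $F = G - D$ is the subforest obtained by deleting some $D \subseteq V(G)$ and $C \subseteq V(F)$ is the set of subsequently contracted vertices. The theorem then reduces to the following key lemma: \emph{for every forest $F$, every $C \subseteq V(F)$, and every $r \geq 1$, the hypergraph $\con_r(F)/C$ admits a simplicial vertex.} I plan to prove this by induction on $|V(F)|$, with the base case $|V(F)| \leq r$ being vacuous since $\con_r(F)$ has no edges.

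The favorable case of the inductive step is when $F$ has a leaf $v$ with $v \notin C$. I claim $v$ itself is simplicial in $\H$. If $v$ is isolated in $F$ (its $G$-neighbor was deleted), it lies in no edge and is vacuously simplicial. Otherwise let $u$ be $v$'s unique $F$-neighbor; every connected $(r+1)$-subset of $F$ containing $v$ must contain $u$, so for any two distinct edges $e_1 = S_1 \setminus C$ and $e_2 = S_2 \setminus C$ through $v$, the union $S_1 \cup S_2$ is connected in $F$ with $|S_1 \cup S_2| \geq r+2$. Removing the leaf $v$ preserves connectedness in $F$, yielding a connected set of size $\geq r+1$. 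Since any connected subset of a forest of size $\geq r+1$ contains a connected $(r+1)$-subset (by iteratively pruning leaves of the induced subtree), I pick $S_3 \subseteq (S_1 \cup S_2) \setminus \{v\}$ with $|S_3| = r+1$ and $F[S_3]$ connected. Any inclusion-minimal edge $e_3 \subseteq S_3 \setminus C$ of $\H$ then lies in $(e_1 \cup e_2) \setminus \{v\}$, verifying simpliciality.

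The residual case is when every leaf of $F$ lies in $C$. Here the plan is to trace the maximal path $v = v_0, v_1, \ldots, v_\ell$ in $F$ starting at a chosen leaf $v \in C$, consisting of degree-$\leq 2$ vertices of $F$, all in $C$ except possibly $v_\ell$. If this $C$-stem terminates at $v_\ell \in V(F) \setminus C$ with $\ell \geq r$, then $\{v_{\ell-r}, \ldots, v_\ell\}$ is a connected $(r+1)$-subset of $F$ whose only non-contracted vertex is $v_\ell$, so $\{v_\ell\}$ becomes an inclusion-minimal edge of $\H$. This forces $v_\ell$ to lie in no other edge of $\H$, hence to be simplicial. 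In the remaining sub-cases---short stems, or stems terminating at a branching $C$-vertex---I plan to reduce to the smaller forest $F - v$ and invoke the induction hypothesis, verifying that a simplicial vertex persists after accounting for the edges created by contracting $v$.

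The main obstacle is precisely this residual case. The naive reduction from $\con_r(F)/C$ to $\con_r(F - v)/(C \setminus \{v\})$ does not preserve the edge set: contracting the leaf $v$ creates new, smaller edges of the form $T \setminus (C \setminus \{v\})$ for connected $r$-subsets $T$ of $F - v$ containing the neighbor $u$ of $v$. These additional edges can in turn simplify away larger edges, potentially shifting which vertices are simplicial. Making the reduction precise---carefully tracking these extra edges across the various branching configurations that can occur when every leaf is contracted---is where the technical heart of the argument will lie.
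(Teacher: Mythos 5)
Your reduction of an arbitrary minor to the form $\con_r(F)/C$ with $F$ a subforest is valid, and your ``favorable case'' (some leaf $v$ of $F$ survives contraction) is a correct and complete argument: the union $S_1\cup S_2$ of the two lifted connected $(r+1)$-sets stays connected after deleting the leaf $v$, still has at least $r+1$ vertices, and can be pruned to a connected $(r+1)$-set $S_3$ avoiding $v$, whose image contains a minimal edge $e_3\subseteq (e_1\cup e_2)\setminus\{v\}$. The problem is the residual case, where every leaf of $F$ has been contracted. There you only settle one sub-case (a degree-$\le 2$ contracted stem of length $\ge r$ ending at a surviving vertex, which produces a singleton edge), and for everything else you propose to induct on $F-v$ --- while yourself observing that contracting the leaf $v$ creates new edges coming from connected $r$-subsets of $F-v$ through the neighbour of $v$, so that $\con_r(F)/C$ is \emph{not} a minor of $\con_r(F-v)$ and the inductive hypothesis does not apply. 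You have correctly located the difficulty, but you have not resolved it; as written the proof is incomplete precisely at its hardest point, and repairing it would require either a strengthened inductive statement about hypergraphs of the form ``$\con_r$ of a forest plus extra short edges along contracted stems'' or a different idea altogether.

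The paper's proof supplies that different idea and avoids the induction entirely. Peel the tree $G$ into layers $V_1, V_2,\dots$ by iterated leaf removal, and in the minor $H$ choose a surviving vertex $\tilde v$ from the \emph{lowest} layer meeting $V(H)$. Given two edges of $H$ through $\tilde v$, lift them to connected $(r+1)$-sets $f_1,f_2$, pick in each the surviving vertex $x_i\in e_i$ closest to $\tilde v$, and take the shortest paths $g_1,g_2$ from $\tilde v$ to $x_1,x_2$ inside $G[f_1],G[f_2]$. The minimality of the layer index forces every interior vertex of $g_1\cup g_2$ to have been contracted, and the fact that $\tilde v$ is a leaf of the residual subtree makes $(g_1\cup g_2)-\tilde v$ connected; one then grows or prunes this set to a connected $(r+1)$-set avoiding $\tilde v$ and intersects with $V(H)$ to get $e_3$. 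This single argument covers both your favorable and residual cases at once, which is exactly what your proposal is missing. If you want to keep your framework, the fix is to replace the induction in the residual case by this direct choice of the ``earliest-peeled'' surviving vertex together with the shortest-path/contracted-interior argument.
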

	
	\begin{proof}
		Let $V_1$ be the set of all the simplicial vertices  of $G$. For $i \ge 2$, let $V_i$ be the set of all the simplicial vertices  of $ G \setminus \bigcup_{j=1}^{i-1} V_j$. 
		By  \Cref{def:simplicial vertex in chordal graph}, each of these $V_i$'s is non-empty. We show that any minor hypergraph $H$ of the hypergraph $\con_r(G)$ has a simplicial vertex. Let $\ell = \min \{i : V(H) \cap V_i \ne \emptyset\}$. Choose a vertex $\tilde{v} \in V_{\ell} \cap V(H)$. We show that $\tilde{v}$ is simplicial in $H$.
		
		Let $e_1,e_2 \in E(H)$ be two distinct edges containing $v$. Let $f_1,f_2 \in E(\con_r(G))$ be such that $f_1 \cap V(H) = e_1$, $f_2 \cap V(H) = e_2$ and all the vertices  of $f_1 \setminus e_1$ and $ f_2 \setminus e_2$ have been contracted in $\con_r(G)$ to reach $H$. Choose $x_1 \in e_1 \setminus e_2$ and $x_2 \in e_2 \setminus e_1$ such that,
		\begin{equation*}
		    \begin{split}
		    d_{G[f_1]}(x_1,\tilde{v})  & \le d_{G[f_1]}(x,\tilde{v}) ~ \mathrm{~ for~all~} x \in f_1,
		 \text{ and } \\
		 d_{G[f_2]}(x_2,\tilde{v}) & \le d_{G[f_2]}(y,\tilde{v}) ~ \mathrm{~ for~all~} y \in f_2,
		 \end{split}
		\end{equation*}
		where $d_G(u,v)$ denotes the distance\footnote{In a graph $G$, for $u,v \in V(G)$, the {\it distance} between $u$ and $v$, denoted $d_G(u,v)$, is the length of the shortest path in $G$ with end points $u$ and $v$. When the context is clear, we drop the subscript $G$ from $d_G(u,v)$. 
    }  between vertices $u$ and $v$ in $G$. 
    
		Let $g_1 = \tilde{v} y_1 \cdots y_{p}  x_1$ and $g_2 = \tilde{v} z_1 \cdots z_{q} x_2$ be the minimum length paths in $G[f_1]$ and $G[f_2]$ respectively. Since
		$\tilde{v} \in g_1 \cap g_2$, the subgraph $g_1 \cup g_2$ is connected. By definition of $x_1$ and $g_1$,  $\{y_1, y_2, \dots, y_p\} \cap e_1 = \emptyset$. Similarly, $\{z_1, z_2, \dots, z_q\} \cap e_2 = \emptyset$. Therefore, $V(g_1 \cup g_2 ) \cap V(H) = \{\tilde{v}, x_1,x_2\}$. 

\begin{claim}\label{claim:shellable}
        $V(g_1 \cup g_2) \bigcap (\displaystyle \cup_{i=1}^{\ell-1} V_i) = \emptyset$.
\end{claim}		
		
	\begin{proof}[Proof of \Cref{claim:shellable}]
	 We prove it by contradiction. Suppose $V(g_1 \cup g_2) \bigcap (\displaystyle \cup_{i=1}^{\ell-1} V_i) \ne \emptyset$. Without loss of generality, assume that $V(g_1) \cap (\displaystyle \cup_{i=1}^{\ell-1} V_i) \ne \emptyset$. Since $\tilde{v}, x_1 \notin \cup_{i=1}^{\ell-1} V_i$, $V(g_1) \cap (\displaystyle \cup_{i=1}^{\ell-1} V_i) \ne \emptyset$ implies that $x_1 \notin N_G(\tilde{v})$. Let $t = \min \{i: V(g_1) \cap V_i \ne \emptyset\}$. Clearly $1 \le t < \ell$. Let $y_j$ be the first vertex in the path $g_1 = \tilde{v} y_1 \cdots y_{p}  x_1$ such that $y_j \in V_t$. 
		Since $y_j$ is a simplicial vertex in $G[\cup_{i \ge t} V_i]$, $y_{j-1}$ is adjacent to $y_{j+1}$. Thus $g_1' = \tilde{v} y_1 \cdots y_{j-1} y_{j+1} \dots y_{p}  x_1$ is a smaller length path than $g_1$ in $G[f_1]$ connecting $\tilde{v}$ and $x_1$, which is a contradiction. If $y_j \in \{y_1, y_p\}$, then similar arguments lead us to a smaller length path connecting $\tilde{v}$ and $x_1$, which again is a contradiction.
	\end{proof}

		Since $\tilde{v} \in V_{\ell}$ is a simplicial vertex in $G[\cup_{i \ge \ell}V_i]$, either $y_1 = z_1$ or $y_1$ is adjacent to $z_1$. Thus $g=G[V(g_1 \cup g_2) - \{\tilde{v}\}]$ is a connected subgraph of $G$. We now find an edge $e_3\in E(H)$ such that $e_3 \subseteq (e_1 \cup e_2)\setminus \{\tilde{v}\}$.
		
		If $|V(g)| \geq r+1$, then take a connected subgraph $f$ of $g$ of cardinality $r+1$ containing $x_1$ and let  $e_3$ to be $V(f)\cap V(H)$. If $|V(g)| \leq r$, then extend $g$ to an $(r+1)$-connected subgraph $f$ of $G[(f_1 \cup f_2) - \{v_1\}]$ and let $e_3$ to be $V(f)\cap V(H)$. 
	\end{proof}
	
	For a better understanding of the proof of \Cref{thm:mainshellable}, we give an example here.

	\begin{figure}[]
		\centering
		\begin{tikzpicture}
			[scale=0.6, vertices/.style={draw, fill=black, circle, inner
				sep=1pt}, NoVert/.style={draw, circle, inner
				sep=1.2pt}]
			
			\draw[line width=7pt,orange!65,double distance=1pt] (12,11) -- (16,15);
			\draw[line width=7pt,orange!65,double distance=1pt] (16,13) -- (16,15);
			\draw[line width=7pt,orange!65,double distance=1pt] (16,13) -- (14,11);
			\node[fill=orange!65, circle, inner
				sep=6pt] () at (16, 15) {};
			\node[fill=orange!65, circle, inner
				sep=5.5pt] () at (16, 13) {};
			\node[fill=orange!65, circle, inner
				sep=5.5pt] () at (12, 11) {};
			\node[fill=orange!65, circle, inner
				sep=5.5pt] () at (14, 11) {};
			\draw[line width=3pt,blue!35,double distance=1pt] (16,13) -- (16,15);
			\draw[line width=3pt,blue!35,double distance=1pt] (16,13) -- (14,11);
			\draw[line width=3pt,blue!35,double distance=1pt] (16,15) -- (20,11);
			\draw[line width=3pt,orange!35,double distance=1pt] (16,15) -- (16,15);
			\node[fill=blue!35, circle, inner
				sep=3pt] () at (16, 15) {};
			\node[fill=blue!35, circle, inner
				sep=2.8pt] () at (16, 13) {};
			\node[fill=blue!35, circle, inner
				sep=2.8pt] () at (20, 11) {};
			\node[fill=blue!35, circle, inner
				sep=2.8pt] () at (14, 11) {};
			\node[circle, inner
				sep=3pt] (f1) at (11, 14.5) {$T[f_1]$};
			\draw [thick,->,dashed] (f1)-- (14.4,14);
			\node[circle, inner
				sep=3pt] (f2) at (20.5, 14.5) {$T[f_2]$};
			\draw [thick,->,dashed] (f2)--(17.4,14);

			\node[NoVert,label=above:{{$v_{5,1}$}}] (0) at (10, 17) {};
			\node[vertices,label=left:{{$v_{4,1}$}}] (1) at (5, 15) {};
			\node[vertices,label=right:{{$v_{4,2}$}}] (2) at (16, 15) {};
			\node[vertices,label=left:{{$v_{3,1}$}}] (3) at (3, 13) {};
			\node[vertices,label=right:{{$v_{3,2}$}}] (4) at (7, 13) {};
			\node[vertices,label=left:{{$v_{3,3}$}}] (5) at (14, 13) {};
			\node[NoVert,label=right:{{$v_{3,4}$}}] (6) at (16, 13) {};
			\node[vertices,label=right:{{$v_{3,5}$}}] (7) at (18, 13) {};				
			\node[vertices,label=above:{{$v_{2,1}$}}] (8) at (1, 11) {};
			\node[vertices,label=above:{{$v_{2,2}$}}] (9) at (5, 11) {};
			\node[NoVert,label=above:{{$v_{2,3}$}}] (10) at (9, 11) {};
			\node[vertices,label=above:{{$v_{2,4}$}}] (11) at (12, 11) {};								
			\node[vertices,label=right:{{$v_{2,5} = \tilde{v}$}}] (12) at (14, 11) {};
			\node[vertices,label=above:{{$v_{2,6}$}}] (13) at (18, 11) {};
			\node[vertices,label=above:{{$v_{2,7}$}}] (14) at (20, 11) {};
			
			\node[NoVert,label=below:{{$v_{1,1}$}}] (15) at (1, 9) {};
			
			\node[NoVert,label=below:{{$v_{1,2}$}}] (16) at (5, 9) {};
			\node[NoVert,label=below:{{$v_{1,3}$}}] (17) at (9, 9) {};
			\node[NoVert,label=below:{{$v_{1,4}$}}] (18) at (12, 9) {};
			\node[NoVert,label=below:{{$v_{1,5}$}}] (19) at (14, 9) {};
			\node[NoVert,label=below:{{$v_{1,6}$}}] (20) at (18, 9) {};
			\node[NoVert,label=below:{{$v_{1,7}$}}] (21) at (20, 9) {};
			
			\foreach \to/\from in
			{ 1/4, 2/5, 2/7, 1/3, 3/8, 3/9, 5/11, 7/14} \draw [-] (\to)--(\from);

			
			\foreach \to/\from in 
 			{0/1, 0/2, 2/6, 6/12, 6/13, 4/10, 10/17, 8/15, 9/16, 11/18, 12/19, 13/20, 14/21} \draw [dotted] (\to)--(\from);

		\end{tikzpicture}
		\caption{Illustration of the proof of \Cref{thm:mainshellable}}	\label{fig:examplecomputation}
	\end{figure}
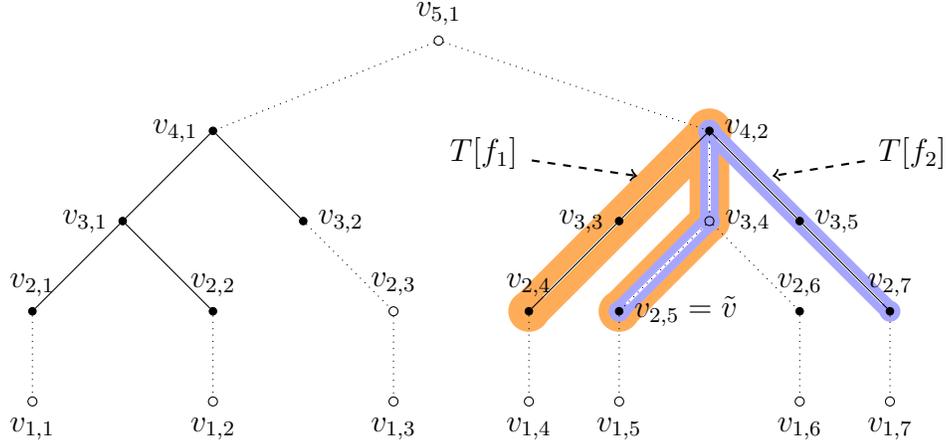

	\begin{example}
	\normalfont
	A minor $H$ of a tree $T$ is depicted in  \Cref{fig:examplecomputation}, where hollow vertices have been contracted. Let $r=4$ and pick $v_{2,5} \in V_2 \cap H$ to be $\tilde{v}$. Let $e_1 = \{\tilde{v}, v_{4,2}, v_{3,3}, v_{2,4}\}$ and $e_2 = \{\tilde{v}, v_{4,2}, v_{3,5}, v_{2,7}\}$, then $x_1 = v_{3,3} \in e_1 \setminus e_2$ and $x_2 = v_{3,5} \in e_2 \setminus e_1$. Take $f_1 = \{\tilde{v}, v_{3,4}, v_{4,2}, v_{3,3}, v_{2,4}\}$ and $f_2 = \{\tilde{v}, v_{3,4}, v_{4,2}, v_{3,5}, v_{2,7}\}$ (as depicted in \Cref{fig:examplecomputation}), then $g_1 = \tilde{v} v_{3,4}v_{4,2} v_{3,3}$ and $g_2 = \tilde{v} v_{3,4}v_{4,2} v_{3,5}$. So $g = T[\{v_{3,4},v_{4,2}, v_{3,3}, v_{3,5}\}]$ is a connected component of $T$ and $|g| = 4 < r+1$. Choosing $f$ to be either $\{v_{3,4},v_{4,2}, v_{3,3}, v_{3,5}, v_{2,4}\}$ or $\{v_{3,4},v_{4,2}, v_{3,3}, v_{3,5}, v_{2,7}\}$ and taking $e_3=f\cap V(H)$ suffices.
	\end{example} 
	
	\begin{corollary}\label{cor:higherindshellable}
		For any tree $T$, $\ind_r(T)$ is shellable.
	\end{corollary}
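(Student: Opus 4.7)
The corollary follows by chaining together the three ingredients already established in this section, so the plan is essentially a one-line combination rather than a fresh argument. First, by \Cref{thm:mainshellable}, for any tree $T$ the hypergraph $\con_r(T)$ is chordal in the sense of \Cref{def:simplicial vertex in chordal graph}. Next, by \Cref{thm:woodroofe shellable} (Woodroofe's result), the independence complex $\ind(\con_r(T))$ of any chordal hypergraph is shellable. Finally, by the proposition identifying $\ind(\con_r(G))$ with $\ind_r(G)$ for every graph $G$, we conclude that $\ind_r(T) = \ind(\con_r(T))$ is shellable.

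So I would simply write: ``By \Cref{thm:mainshellable}, $\con_r(T)$ is a chordal hypergraph. Hence by \Cref{thm:woodroofe shellable}, $\ind(\con_r(T))$ is shellable. Since $\ind(\con_r(T)) = \ind_r(T)$, the result follows.'' There is no obstacle to overcome here — all the genuine work has been absorbed into \Cref{thm:mainshellable}, whose proof carefully uses the layered decomposition $V_1, V_2, \dots$ of $T$ into iterated simplicial vertices and a shortest-path argument to produce the required edge $e_3$ inside $(e_1 \cup e_2)\setminus \{\tilde v\}$. The corollary is merely the packaging step that translates this hypergraph-theoretic statement back into the language of $r$-independence complexes via \Cref{def:conrG} and the preceding proposition.
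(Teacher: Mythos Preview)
Your proof is correct and matches the paper's own argument essentially verbatim: the paper also simply invokes \Cref{thm:mainshellable} to conclude $\con_r(T)$ is chordal and then applies \Cref{thm:woodroofe shellable}. The only difference is that you make the identification $\ind(\con_r(T)) = \ind_r(T)$ explicit, which the paper leaves implicit.
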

	
	\begin{proof}
		From \Cref{thm:mainshellable}, the associated hypergraph $\con_{r}(T)$ is chordal. Therefore, the statement follows from  \Cref{thm:woodroofe shellable}.
	\end{proof}

\section{Vertex decomposable complexes}\label{sec:VD}

Let $\K$ be a simplicial complex. For a simplex $F \in \K$, define
\begin{equation*}
    \begin{split}
        \mathrm{lk}_{\K}(F) & := \{F^\prime \in \K : F \cap F^\prime = \emptyset,~ F \cup F^\prime \in \K\}, \\
        \mathrm{del}_{\K}(F) & := \{F^\prime \in \K : F \nsubseteq F^\prime\}.
    \end{split}
\end{equation*}

The simplicial complexes $\mathrm{lk}_{\K}(F)$ and $\mathrm{del}_{\K}(F)$ are called \emph{link} of $F$ and \emph{(face) deletion} of $F$ in $\K$ respectively. For $x\in V(\K)$, to simplify the notations, we denote $\lk_{\K}(\{x\})$ and $\del_{\K}(\{x\})$ by $\lk_{\K}(x)$ and $\del_{\K}(x)$, respectively. 

\begin{definition}\label{vddef}
\normalfont A simplicial complex $\K$ is said to be {\it vertex decomposable} if either $\K$ is a simplex or there exists $x\in V(\K)$ such that
 
 \begin{enumerate}
  \item[$\bullet$] both $\lk_{\K}(x)$ and $\del_{\K}(x)$ are vertex decomposable, and
  \item[$\bullet$] each facet of $\del_{\K}(x)$ is a facet of $\K$.
 \end{enumerate}
The vertex $x$ is called a \textit{shedding vertex} if the second condition above is satisfied.  
\end{definition}

It is easy to observe that when $G$ is a complete graph,  $\ind_r(G)$ is the $(r-1)$-skeleton of a simplex and hence it is vertex decomposable.
The fact that a vertex decomposable simplicial complex is shellable can be easily obtained from Wachs' result \cite[Lemma 6]{wachs99}.

Proving the vertex decomposability of the link of any vertex in higher independence complex is fairly difficult. 
We illustrate this using an example, let us take $r=2$ and $G=P_7$.  

\begin{figure}[ht]
\centering
\begin{subfigure}[]{0.35\textwidth}
    \centering
    \begin{tikzpicture}[scale=.55]
\draw [fill] (0,3) circle [radius=0.1];
\draw [fill] (2,3) circle [radius=0.1];
\draw [fill] (4,3) circle [radius=0.1];
\draw [fill] (6,3) circle [radius=0.1];
\draw [fill] (8,3) circle [radius=0.1];
\draw [fill] (10,3) circle [radius=0.1];
\draw [fill] (12,3) circle [radius=0.1];
\node at (0,3.5) {$1$};
\node at (2,3.5) {$2$};
\node at (4,3.5) {$3$};
\node at (6,3.5) {$4$};
\node at (8,3.5) {$5$};
\node at (10,3.5) {$6$};
\node at (12,3.5) {$7$};
\draw (0,3)--(2,3)--(2,3)--(4,3)--(4,3)--(6,3)--(6,3)--(8,3)--(8,3)--(10,3)--(10,3)--(12,3);
\end{tikzpicture}
    \caption{$P_7$}\label{fig:p7}
\end{subfigure}
\hfill
\begin{subfigure}[]{0.35\textwidth}
    \centering
    \begin{tikzpicture}[scale=.55]
\draw [fill] (5,3) circle [radius=0.1];
\draw [fill] (8,3) circle [radius=0.1];
\draw [fill] (10,3) circle [radius=0.1];
\draw [fill] (12,3) circle [radius=0.1];
\node at (5,3.5) {$2$};
\node at (8,3.5) {$5$};
\node at (10,3.5) {$6$};
\node at (12,3.5) {$7$};
\draw (8,3)--(8,3)--(10,3)--(10,3)--(12,3);
\end{tikzpicture}
    \caption{$P_7^1$}\label{fig:p71}
\end{subfigure}
\vspace{0.3cm}

\begin{subfigure}[]{0.35\textwidth}
    \centering
    \begin{tikzpicture}[scale=.55]
\draw [fill] (4,3) circle [radius=0.1];
\draw [fill] (7,3) circle [radius=0.1];
\draw [fill] (10,3) circle [radius=0.1];
\draw [fill] (12,3) circle [radius=0.1];
\node at (4,3.5) {$1$};
\node at (7,3.5) {$4$};
\node at (10,3.5) {$6$};
\node at (12,3.5) {$7$};
\draw (10,3)--(12,3);
\end{tikzpicture}
    \caption{$P_7^2$}\label{fig:p72}
\end{subfigure}
\hfill
\begin{subfigure}[]{0.35\textwidth}
    \centering
    \begin{tikzpicture}[scale=.55]
\draw [fill] (5,3) circle [radius=0.1];
\draw [fill] (8,3) circle [radius=0.1];
\draw [fill] (10,3) circle [radius=0.1];
\draw [fill] (12,3) circle [radius=0.1];
\node at (5,3.5) {$1$};
\node at (8,3.5) {$5$};
\node at (10,3.5) {$6$};
\node at (12,3.5) {$7$};
\draw (8,3)--(10,3)--(10,3)--(12,3);
\end{tikzpicture}
 \caption{$P_7^3$}\label{fig:p73}
\end{subfigure}
\caption{Path graph and its subgraphs.}
\label{fig:link_example}
\end{figure}
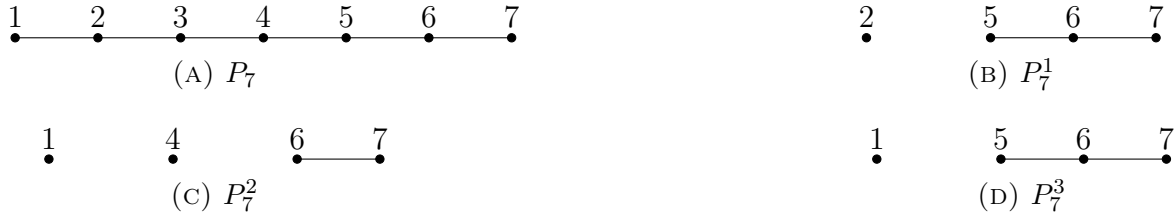

It is easy to see that the vertices labelled $3$ and $5$ are shedding vertices. 
The link of either of these vertices is a union of three subcomplexes. 
Each of this subcomplex is in fact an $\ind_2$ of an appropriate subgraph. 
For example, $\lk_{\Indr{2}(P_7)}(3)= \ind_2(P_7^1)\cup \ind_2(P_7^2)\cup \ind_2(P_7^3)$ (see \Cref{fig:link_example}). 
As $r$ and the number of vertices of $G$ increase the expression for the link of a vertex in $\ind_r(G)$ is more involved. 
Therefore, even if we assume, by induction, that each of these subcomplexes in the union are vertex decomposable, nothing can be concluded for the link itself. Hence we use ideas from commutative algebra for our proofs. 
We begin by recalling some related concepts.

Let $\K$ be a simplicial complex and  $V(\K)=\{x_1,x_2,\ldots,x_n\}$. Let $R=\mathbb{K}[x_1,\ldots,x_n]$ be the polynomial ring in $n$ indeterminates over a field $\mathbb{K}$. To $\K$ we associate  a square-free monomial ideal $\mathcal{SR}(\K)=\langle \mathbf x_F:=\prod_{x_i\in F} x_i\mid F \text{ is not a face of }\K \rangle$ in the polynomial ring $R$. The ideal $\mathcal{SR}(\K)$ is called the {\it Stanley-Reisner ideal} of $\K$. The ideal $\mathcal{SR}(\K)$ can also be viewed as a hyper-edge ideal of a hypergraph as follows.

Let $\mathcal{H}$ be a simple hypergraph on the vertex set $V=\{x_1,\ldots,x_n\}$.  The {\it hyper-edge ideal} of $\mathcal{H}$, denoted by $I(\mathcal{H})$, is a square-free monomial ideal in the polynomial ring $R=\mathbb{K}[x_1,\ldots,x_n]$ generated by the monomials $\mathbf{x}_f:=\prod_{x_i\in f}x_i$, where $f$ is an edge of $\mathcal{H}$. If $\K$ is a simplicial complex on the vertex set $V$, then the collection 
$$\mathcal{H}_{\K}=\{F\subseteq V\mid \mathbf{x}_F\text{ is a minimal generator of } \mathcal{SR}(\K)\}$$
forms a simple hypergraph on the
vertex set $V$ and $\mathcal{SR}(\K)$ is same as $I(\H_{\K})$.

Let $I=\langle x_{11}\cdots x_{1l_1},\ldots,x_{q1}\cdots x_{ql_q}\rangle$ be a squarefree monomial ideal in the polynomial ring $R$. The {\it Alexander dual ideal} $I^{\vee}$ of $I$ is defined as
\[
 I^{\vee}:= \langle x_{11},\ldots, x_{1l_1}\rangle\cap\dots\cap\langle x_{q1},\ldots ,x_{ql_q}\rangle.
\]

The {\it Alexander dual of $\K$} is the simplicial complex $\K^{\vee}=\{V\setminus F\mid F\notin\K\}$. From the definition we see that $(\mathcal{SR}(\K))^{\vee}=\langle \mathbf{x}_{V \setminus F} \mid F\text{ is a facet of }\K\rangle$
and hence $(\mathcal{SR}(\K))^{\vee}=\mathcal{SR}(\K^{\vee})$. Recently, Moradi and Khosh-Ahang \cite{MKA} defined the notion of vertex splittable ideal, which is an algebraic analog of the vertex decomposability property of a simplicial complex.

\begin{definition}
\normalfont
 Let $I$ be a monomial ideal in the polynomial ring $R=\mathbb K[x_1,\ldots,x_n]$. We say that $I$ is {\it vertex splittable} if $I$ can be obtained by the following recursive procedure.
 \begin{enumerate}
  \item[(i)] If $I=\langle m\rangle$ where $m$ is a monomial or $I=(0)$ or $I=R$, then $I$ is a vertex splittable ideal.
  
  \item[(ii)] If there exists a variable $x_i$ and two vertex splittable ideals $I_1$ and $I_2$ of $\mathbb K[x_1,\ldots,\widehat{x_i},\ldots,x_n]$ such that $I=x_iI_1+I_2$ with $I_2\subseteq I_1$ and the minimal generators of $I$ is the disjoint union of the minimal generators of $x_iI_1$ and $I_2$, then $I$ is a vertex splittable ideal. 
 \end{enumerate}
\end{definition}

Our aim is to show that the higher independence complexes of certain chordal graphs are vertex decomposable. To do so, we use the following theorem.

\begin{theorem}\textup{\cite[Theorem 2.3]{MKA}}
 A simplicial complex $\K$ is vertex decomposable if and only if $(\mathcal{SR}(\K))^{\vee}$ is vertex splittable.\label{Vertex splittable}
\end{theorem}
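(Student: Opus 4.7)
The plan is to prove both implications simultaneously by induction on $|V(\K)|$, matching the recursive steps in the definitions of vertex decomposability and vertex splittability through the Alexander dual. The crucial observation is that the minimal generators of $I := (\mathcal{SR}(\K))^{\vee} = \langle \mathbf{x}_{V \setminus F} : F \text{ is a facet of } \K\rangle$ are in bijection with the facets of $\K$, and a generator $\mathbf{x}_{V\setminus F}$ is divisible by $x$ if and only if $x \notin F$. The base case is clean: $\K$ is a full simplex on its vertex set iff $\K$ has a unique facet equal to $V(\K)$, iff $I = \langle 1\rangle = R$, which is vertex splittable by clause (i) of the definition.

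For the inductive step, I fix a vertex $x$ and decompose $I = xI_1 + I_2$, where $I_2$ is generated by those $\mathbf{x}_{V\setminus F}$ with $x \in F$ and $xI_1$ by the remaining generators. This produces a disjoint partition of the minimal generators of $I$ into those of $xI_1$ and those of $I_2$. Next I identify these pieces geometrically: first, $I_2 = (\mathcal{SR}(\lk_\K(x)))^{\vee}$ always, because the facets of $\lk_\K(x)$ are exactly $\{F\setminus\{x\} : F \text{ a facet of } \K \text{ with } x \in F\}$; second, $I_1 = (\mathcal{SR}(\del_\K(x)))^{\vee}$ precisely when the shedding condition holds at $x$, i.e., when every facet of $\del_\K(x)$ is a facet of $\K$; and third, the algebraic condition $I_2 \subseteq I_1$ is equivalent to the shedding condition, since it translates (through the bijection) to the statement that for every facet $F\ni x$, the set $F\setminus\{x\}$ is strictly contained in some facet $F'$ of $\K$ with $x\notin F'$. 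The forward direction (vertex decomposable implies vertex splittable) then follows: given a shedding vertex $x$, apply the inductive hypothesis to $\lk_\K(x)$ and $\del_\K(x)$ to conclude that $I_1$ and $I_2$ are vertex splittable.

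For the reverse direction, suppose $I$ is vertex splittable via $I = xI_1 + I_2$ with $I_2 \subseteq I_1$. Define $\K_1$ and $\K_2$ as the unique simplicial complexes on $V\setminus\{x\}$ whose Stanley-Reisner ideals have Alexander duals $I_1$ and $I_2$ respectively. Using the identifications above, $\K_2 = \lk_\K(x)$ unconditionally, and $\K_1 = \del_\K(x)$ because $I_2 \subseteq I_1$ forces the shedding condition. The inductive hypothesis then makes $\lk_\K(x)$ and $\del_\K(x)$ vertex decomposable, certifying $x$ as a shedding vertex for $\K$. The main obstacle I anticipate is the bookkeeping in the equivalence between shedding and $I_2 \subseteq I_1$: verifying that the containment really does prevent any maximal face of $\del_\K(x)$ from being a proper subset of a facet of $\K$ that contains $x$ requires using the fact that a facet $F' \not\ni x$ satisfying $F\setminus\{x\} \subseteq F'$ cannot actually equal $F\setminus\{x\}$ itself, for otherwise $F' \subsetneq F$ would contradict the maximality of $F'$ as a facet of $\K$.
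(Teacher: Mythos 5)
The paper does not prove this statement: it is quoted verbatim from Moradi and Khosh-Ahang \cite[Theorem 2.3]{MKA}, so there is no in-paper proof to compare against. Judged on its own, your argument is correct and is essentially the standard proof of that result. The key identifications all check out: the minimal generators of $(\mathcal{SR}(\K))^{\vee}$ are the monomials $\mathbf{x}_{V\setminus F}$ over facets $F$ (incomparability of facets gives minimality); the generators divisible by $x$ correspond to facets avoiding $x$ and, under shedding, to the facets of $\del_{\K}(x)$; the remaining generators always give $(\mathcal{SR}(\lk_{\K}(x)))^{\vee}$; and $I_2\subseteq I_1$ unwinds, via divisibility of monomials, to the statement that every $F\setminus\{x\}$ with $F$ a facet containing $x$ sits inside a facet avoiding $x$, which is exactly the shedding condition (your closing remark about why the containment is automatically strict is the right way to close that loop). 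Two small points you should tidy up, neither of which is a real gap: the base case of vertex splittability also allows $I=\langle m\rangle$ for a nontrivial monomial $m$ and $I=(0)$, which correspond respectively to $\K$ having a single facet (hence a simplex on its own vertex set) and to the void complex; and throughout you are implicitly treating $\lk_{\K}(x)$ and $\del_{\K}(x)$ as complexes on the ambient set $V\setminus\{x\}$ so that their Alexander duals live in $\mathbb{K}[x_1,\ldots,\widehat{x},\ldots,x_n]$ --- this convention should be stated once so the induction on the number of vertices is unambiguous.
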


By previous discussion, $\mathcal{SR}(\mathcal K)$ can be interpreted as the hyper-edge ideal $I(\mathcal H_{\mathcal K})$. Furthermore, the minimal generators of the ideal $I(\mathcal H_{\mathcal K})^{\vee}$ can be described in terms of the minimal vertex covers of $\H_{\mathcal K}$. In general for a hypergraph $\H$, a subset $\mathcal{C} \subseteq V(\H)$ is called a {\it vertex cover} of $\mathcal H$ if $\mathcal{C}\cap f\neq\emptyset$ for each edge $f\in E(\mathcal{H})$. A vertex cover $\mathcal{C}$ of $\mathcal{H}$ is called a {\it minimal vertex cover} if it is minimal with respect to inclusion among all the vertex covers of $\mathcal{H}$.
 The following result  describes the ideal $I(\H)^\vee$ in terms of the minimal vertex covers of $\H$.

\begin{proposition}{\textup{\cite[cf. Theorem 6.3.39]{RV1}}}\label{min gen dual ideal}
 \[I(\H)^{\vee}=\langle x_{i_1}\cdots x_{i_t}\mid \{x_{i_1},\ldots,x_{i_t}\} \text{ is a minimal vertex cover of } \H \rangle.\]
 
\end{proposition}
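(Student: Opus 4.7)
The plan is to unwind both sides of the equality directly from the definitions of the hyper-edge ideal and the Alexander dual, and then identify the minimal generators of the resulting intersection as precisely those squarefree monomials supported on minimal vertex covers of $\H$.

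First, let $f_1,\ldots,f_q$ be the edges of $\H$, so that by definition $I(\H)=\langle \mathbf{x}_{f_1},\ldots,\mathbf{x}_{f_q}\rangle$. Applying the definition of Alexander dual recalled in the paragraph above \Cref{Vertex splittable} to each monomial generator $\mathbf{x}_{f_j}=\prod_{x_i\in f_j}x_i$, I would rewrite
\[
I(\H)^{\vee}=\bigcap_{j=1}^{q}P_{f_j}, \qquad \text{where } P_{f_j}:=\langle x_i : x_i\in f_j\rangle.
\]
This is purely a bookkeeping step, recording each edge of $\H$ as the monomial prime ideal generated by the variables indexing its vertices.

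Next, I would identify the squarefree monomials lying in this intersection. For a subset $S\subseteq V(\H)$, the squarefree monomial $\mathbf{x}_S$ belongs to the prime $P_{f_j}$ if and only if some variable of $\mathbf{x}_S$ is a generator of $P_{f_j}$, which happens if and only if $S\cap f_j\neq\emptyset$. Hence $\mathbf{x}_S\in\bigcap_{j=1}^{q}P_{f_j}$ exactly when $S\cap f_j\neq\emptyset$ for every edge $f_j$, i.e.\ precisely when $S$ is a vertex cover of $\H$. Since each $P_{f_j}$ is a squarefree monomial ideal, their intersection is again a squarefree monomial ideal, and its monomials are spanned by such $\mathbf{x}_S$.

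Finally, to pass from monomials in the ideal to minimal generators, I would invoke the standard fact that in a squarefree monomial ideal containment of monomials is divisibility, so the minimal monomial generators are exactly the $\mathbf{x}_S$ for which no proper subset of $S$ already yields a monomial in the ideal. By the previous paragraph this is exactly the condition that $S$ be a minimal vertex cover of $\H$, which gives the desired description. The only subtlety worth flagging is the last step, namely ensuring that one is tracking minimality at the level of supports rather than merely elements; this is immediate for squarefree monomial ideals but would fail without the squarefreeness, which is automatic here since $\H$ is a simple hypergraph.
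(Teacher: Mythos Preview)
The paper does not supply its own proof of this proposition; it is simply quoted with a reference to Villarreal's book. Your argument is the standard one and is correct: expressing $I(\H)^{\vee}$ as the intersection of the monomial primes $P_{f_j}$, identifying membership of $\mathbf{x}_S$ in that intersection with $S$ being a vertex cover, and then reading off minimal generators as minimal vertex covers is exactly how this result is usually established.
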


We now proceed to show that the $r$-independence complex of a caterpillar graph is vertex decomposable. But first we recall some definitions. 

A {\itshape leaf}  is a vertex which  is adjacent to exactly one vertex. If a vertex is adjacent to at least two vertices, then it is called an {\itshape internal vertex}. An internal vertex is called a {\itshape corner vertex}, if it is adjacent to at most one internal vertex. A {\itshape tree} is a graph in which any two vertices are connected by exactly one path. The following graphs are a special class of trees.

\begin{definition}\label{def:caterpillar graph}
\normalfont A {\itshape caterpillar graph} is a tree in which every vertex is on a central path or only one edge away from the path (see \Cref{figure 122465} for an example). 
\end{definition}

In this section we show that the Alexander dual of Stanley-Reisner ideal of the $r$-independence complex of a caterpillar graph is a vertex splittable ideal.
Let $CG$ be a caterpillar graph with $|V(CG)|=n$ and $\{\alpha_i:i\in [l]\}$ be the set of all internal vertices of $CG$ such that $\alpha_i$ is connected to $\alpha_{i-1}$ and $\alpha_{i+1}$ for $2\le i\le l-1$. Moreover, $\alpha_1$ and $\alpha_l$ are connected to $\alpha_2$ and $\alpha_{l-1}$, respectively. Let $1\le t\le l
$ be an integer such that the leaves attached to $\alpha_t$ are $\beta_{t,1},\ldots,\beta_{t,m_t},\ldots,\beta_{t,m_t'}$ for some integers $ m_t\le m_{t'}$. Also for $1\le i\le l$ and $i\neq t$, the leaves attached to $\alpha_i$ are $\beta_{i,1},\ldots,\beta_{i,m_i}$ for some integers $m_i$. Thus for $2\le i\le l-1$ and $i\neq t$ we have $N(\alpha_i)=\{\alpha_{i-1},\alpha_{i+1},\beta_{i,1},\ldots,\beta_{i,m_i}\}$. Moreover, $N(\alpha_1)=\{\alpha_2,\beta_{1,1},\ldots,\beta_{1,m_1}\}$, $N(\alpha_l)=\{\alpha_{l-1},\beta_{l,1},\ldots,\beta_{l,m_l}\}$ and $N(\alpha_t)=\{\alpha_{t-1},\alpha_{t+1},\beta_{t,1},\ldots,\beta_{t,m_t},\ldots,\beta_{t,m_t'}\}$. 

\begin{remark}
\normalfont    Note that it may happen that some of the vertices $\alpha_i$ in the central path may not be connected to any leaf. In that case, the value of $m_i$ would be zero. Since the $m_i = 0$ cases do not affect any results or the calculations, for brevity, we are not explicitly writing those cases in the following calculations. When $m_i=0$, the set $B_{i,0}$ is the same as the set $A_i$ in the discussion below.
\end{remark}

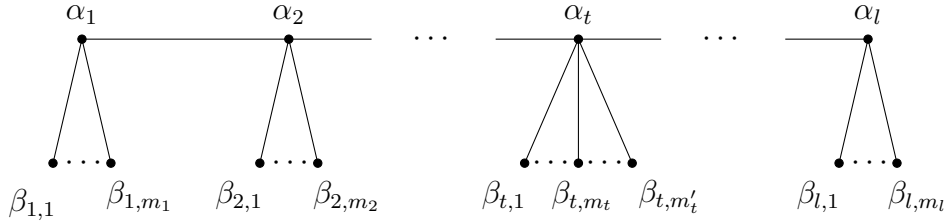
\begin{figure}[!ht]
\centering
\begin{tikzpicture}
[scale=.55]
\draw [fill] (0,3) circle [radius=0.1];
\draw [fill] (5,3) circle [radius=0.1];
\draw [fill] (12,3) circle [radius=0.1];
\draw [fill] (19,3) circle [radius=0.1];
\draw [fill] (-0.7,0) circle [radius=0.1];
\draw [fill] (0.7,0) circle [radius=0.1];
\draw [fill] (4.3,0) circle [radius=0.1];
\draw [fill] (5.7,0) circle [radius=0.1];
\draw [fill] (10.7,0) circle [radius=0.1];
\draw [fill] (12,0) circle [radius=0.1];
\draw [fill] (13.3,0) circle [radius=0.1];
\draw [fill] (18.3,0) circle [radius=0.1];
\draw [fill] (19.7,0) circle [radius=0.1];
\node at (0,3.6) {$\alpha_1$};
\node at (5,3.6) {$\alpha_2$};
\node at (12,3.6) {$\alpha_t$};
\node at (19,3.6) {$\alpha_l$};
\node at (-1.2,-1) {$\beta_{1,1}$};
\node at (1.4,-0.8) {$\beta_{1,m_1}$};
\node at (3.8,-0.8) {$\beta_{2,1}$};
\node at (6.4,-0.8) {$\beta_{2,m_2}$};
\node at (10.2,-0.8) {$\beta_{t,1}$};
\node at (12.1,-0.8) {$\beta_{t,m_t}$};
\node at (14.2,-0.8) {$\beta_{t,m_t'}$};
\node at (17.8,-0.8) {$\beta_{l,1}$};
\node at (20.2,-0.8) {$\beta_{l,m_l}$};
\node at (8.5,3) {$\cdots$};
\node at (15.5,3) {$\cdots$};
 \node at (0.1,0) {$\cdots$};
 \node at (5.1,0) {$\cdots$};
 \node at (11.4,0) {$\cdots$};
 \node at (12.7,0) {$\cdots$};
 \node at (19.1,0) {$\cdots$};
 \draw (0,3)--(-0.7,0);
 \draw (0,3)--(0.7,0);
\draw (5,3)--(4.3,0);
\draw (5,3)--(5.7,0);
\draw (12,3)--(10.7,0);
\draw (12,3)--(12,0);
\draw (12,3)--(13.3,0);
\draw (19,3)--(18.3,0);
\draw (19,3)--(19.7,0);
\draw (0,3)--(5,3);
\draw (5,3)--(7,3);
\draw (10,3)--(12,3);
 \draw (12,3)--(14,3);
 \draw (17,3)--(19,3);
\end{tikzpicture}\caption{Caterpillar graph}\label{figure 122465}
\end{figure}

Suppose $|\{\alpha_i,\beta_{i,j}\mid 1\le i\le t,\,1\le j\le m_i\}|=r+1$. Thus $r+1=\sum_{i=1}^tm_i+t$. Let $I=(\mathcal{SR}(\operatorname{Ind}_r(CG)))^{\vee}$ be the Alexander dual ideal in the polynomial ring $R=\mathbb{K}[x_{\theta}\mid \theta\in V(CG)]$. Observe that $I$ is the dual of the hyper-edge ideal of $\con_r(CG)$ (see \Cref{def:conrG}). By \Cref{min gen dual ideal}, the minimal generators of $I$ are given by the minimal vertex covers of $\con_r(CG)$, i.e., 
\[
I=\langle x_{\theta_1}\cdots x_{\theta_s}\mid \{\theta_1,\ldots,\theta_s\}\subseteq V(CG)\text{ is a minimal vertex cover of } \con_r(CG)\rangle .
\]

Let $\mathcal C$ be a minimal vertex cover of $\con_r(CG)$. Note that $\mathcal C$ must contain at least one element from the set $\{\alpha_i,\beta_{i,j}:1\le i\le t\text{ and } 1\le j\le m_i\}$. 

For $1\le i\le t$ and $0\le k\le m_i$ let \[
A_i:=\{\mathcal C\mid \alpha_i\in\mathcal C\text{ but } \alpha_j,\beta_{j,1},\ldots ,\beta_{j,m_j} \notin\mathcal{C}\text{ for }1\le j\le i-1\},\] and 
\begin{align*}
B_{i,k}:=\{\mathcal C\mid \beta_{i,k}\in\mathcal C \text{ but }\alpha_u,\alpha_i,\beta_{u,v},\beta_{i,w}\notin\mathcal C  \text{ for } 1\le u\le i-1,1\le v\le m_u,1\le w\le k-1\}.
\end{align*}
Here we make the convention that $\beta_{i,0}=\alpha_i$.

In the definitions of $A_i$ and $B_{i,k}$, $\mathcal{C}$ is always considered to be a minimal vertex cover of $\con_r(CG)$. Note that if $\mathcal C$ is a minimal vertex cover of $\con_r(CG)$ such that $\beta_{i,k}\in\mathcal C$ then $\alpha_u\notin\mathcal C$ for $i\le u\le t$. Moreover, if $\mathcal C$ is a minimal vertex cover of $\con_r(CG)$ such that $\alpha_i\in\mathcal{C}$ for some $i\le t$, then $\alpha_u\notin\mathcal C$ for $i<u\le t$.

For a subset $F=\{\theta_1,\ldots,\theta_s\}\subseteq V(CG)$, let $\mathbf x_{F}$ denote the monomial $x_{\theta_1}\cdots x_{\theta_s}$ in the polynomial ring $R$. Consider the ideals $J_i=\langle \mathbf x_{\mathcal C\setminus \{\alpha_i\}}\mid \mathcal C\in A_i\rangle$ for $1\le i\le t$ and $J_{i,k}=\left\langle \mathbf x_{\mathcal C\setminus \{\beta_{i,k}\}}\mid \mathcal C\in B_{i,k}\right\rangle$ for $1\le k\le m_i$ in $R$. Then

\begin{align}\label{ideal decomposition}
I=\sum_{i=1}^t x_{\alpha_i}\cdot J_i+\sum_{i=1}^t\sum_{k=1}^{m_i}x_{\beta_{i,k}}\cdot J_{i,k}.
\end{align}
Here we remark that if a minimal vertex cover of $\con_r(CG)$ contains $\alpha_i$ for some $1\le i\le t$, then it automatically does not contain $\beta_{i,k}$ for each $1\le k\le m_i$. 

\begin{lemma}\label{leaf and internal}
 With the notations as above we have $J_{1,1}\subseteq J_1$
\end{lemma}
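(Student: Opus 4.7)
The plan is to show that every generator $\mathbf{x}_{\mathcal{C} \setminus \{\beta_{1,1}\}}$ of $J_{1,1}$ lies in $J_1$, and the key geometric fact driving everything is that because $CG$ is a tree and $\beta_{1,1}$ is a leaf whose unique neighbor is $\alpha_1$, every edge of $\con_r(CG)$ that contains $\beta_{1,1}$ must also contain $\alpha_1$. Indeed, an edge of $\con_r(CG)$ is a connected $(r{+}1)$-vertex subset of $V(CG)$, hence induces a subtree of size at least $2$; any subtree containing the leaf $\beta_{1,1}$ is forced to contain $\alpha_1$.

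Given $\mathcal{C} \in B_{1,1}$ (so $\beta_{1,1} \in \mathcal{C}$ and $\alpha_1 \notin \mathcal{C}$), I would form the swap $\mathcal{C}' := (\mathcal{C} \setminus \{\beta_{1,1}\}) \cup \{\alpha_1\}$. Then $\mathcal{C}'$ is again a vertex cover of $\con_r(CG)$: edges disjoint from $\beta_{1,1}$ remain covered by $\mathcal{C} \setminus \{\beta_{1,1}\}$, while edges meeting $\beta_{1,1}$ are covered by the newly inserted $\alpha_1$ thanks to the geometric observation above. Next I would pass to a minimal vertex cover $\mathcal{C}'' \subseteq \mathcal{C}'$. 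The point is that $\alpha_1 \in \mathcal{C}''$, for otherwise $\mathcal{C}'' \subseteq \mathcal{C}' \setminus \{\alpha_1\} = \mathcal{C} \setminus \{\beta_{1,1}\} \subsetneq \mathcal{C}$ would be a strictly smaller vertex cover than $\mathcal{C}$, contradicting the minimality of $\mathcal{C}$.

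Finally, since $A_1$ is defined with the vacuous range $1 \leq j \leq 0$, the only requirement for membership in $A_1$ is that $\alpha_1$ belongs to the cover, so $\mathcal{C}'' \in A_1$. Therefore $\mathbf{x}_{\mathcal{C}'' \setminus \{\alpha_1\}}$ is a generator of $J_1$, and since $\mathcal{C}'' \setminus \{\alpha_1\} \subseteq \mathcal{C}' \setminus \{\alpha_1\} = \mathcal{C} \setminus \{\beta_{1,1}\}$, this generator divides $\mathbf{x}_{\mathcal{C} \setminus \{\beta_{1,1}\}}$. Thus every generator of $J_{1,1}$ lies in $J_1$, which proves $J_{1,1} \subseteq J_1$. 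The argument is short; the only step that needs genuine justification is the swap preserving the vertex cover property, and that in turn reduces immediately to the leaf structure of $CG$ combined with connectedness of edges in $\con_r(CG)$.
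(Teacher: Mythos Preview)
Your proof is correct and follows essentially the same approach as the paper's: both form the swap $\mathcal{C}' = (\mathcal{C}\setminus\{\beta_{1,1}\})\cup\{\alpha_1\}$ and verify it is a vertex cover using the fact that every hyperedge of $\con_r(CG)$ containing the leaf $\beta_{1,1}$ must also contain $\alpha_1$. The paper simply asserts ``it is enough'' to show $\mathcal{C}'$ is a vertex cover and leaves the passage to a minimal cover implicit, whereas you spell out the reduction to a minimal $\mathcal{C}''\subseteq\mathcal{C}'$ and check that $\alpha_1\in\mathcal{C}''$; your version is slightly more detailed but the underlying argument is identical.
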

\begin{proof}
 Let $\mathcal{C}$ be a minimal vertex cover of $\con_r(CG)$ such that $\mathcal{C}\in B_{1,1}$. It is enough to show that $\mathcal{C}'=(\mathcal{C}\setminus\{\beta_{1,1}\})\cup\{\alpha_1\}$ is a vertex cover of $\con_r(CG)$. Let $S\subseteq V(CG)$ be such that $|S|=r+1$ and the induced subgraph $CG[S]$ is connected. We need to show that $S\cap\mathcal C'\neq \emptyset$. This clearly holds because if $S\subseteq V(CG)$ such that $|S|=r+1$ and $CG[S]$ is connected then $\beta_{1,1}\in S$ implies $\alpha_1\in S$. Now since $S\cap\mathcal{C}\neq\emptyset$ we have $S\cap\mathcal{C}'\neq\emptyset$.
\end{proof}

\begin{lemma}\label{leaf ideal containment}
 With the notations as above we have $J_{i,k}\subseteq J_{i',k'}$ for $i\ge i'$ and $k\ge k'$.
\end{lemma}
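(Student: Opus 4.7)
The plan is to generalize the argument of \Cref{leaf and internal}. Given a minimal vertex cover $\mathcal{C} \in B_{i,k}$, form the candidate
\[
\mathcal{C}'' := (\mathcal{C} \setminus \{\beta_{i,k}\}) \cup \{\beta_{i',k'}\}.
\]
Suppose I can prove that $\mathcal{C}''$ is a vertex cover of $\con_r(CG)$. Then any minimal vertex cover $\mathcal{C}' \subseteq \mathcal{C}''$ must contain $\beta_{i',k'}$, since otherwise $\mathcal{C}' \subseteq \mathcal{C} \setminus \{\beta_{i,k}\}$ would contradict the minimality of $\mathcal{C}$. The inequalities $i' \leq i$ and $k' \leq k$ imply that every vertex forbidden for $B_{i',k'}$ is also forbidden for $B_{i,k}$; hence $\mathcal{C}' \in B_{i',k'}$. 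Since $\mathcal{C}' \setminus \{\beta_{i',k'}\} \subseteq \mathcal{C} \setminus \{\beta_{i,k}\}$, we obtain $\mathbf{x}_{\mathcal{C}' \setminus \{\beta_{i',k'}\}} \mid \mathbf{x}_{\mathcal{C} \setminus \{\beta_{i,k}\}}$, proving $J_{i,k} \subseteq J_{i',k'}$.

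To verify that $\mathcal{C}''$ is a vertex cover, let $S$ be any connected $(r+1)$-subset of $V(CG)$. If $S$ meets $\mathcal{C} \setminus \{\beta_{i,k}\}$ we are done; otherwise $S \cap \mathcal{C} = \{\beta_{i,k}\}$, so $\alpha_i \in S$. Write the backbone portion of $S$ as $\{\alpha_a,\ldots,\alpha_b\}$ with $a \leq i \leq b$ and let $L_S = S \setminus \{\alpha_a,\ldots,\alpha_b\}$. I show $\beta_{i',k'} \in S$ by contradiction: assuming otherwise, I construct a connected $(r+1)$-set $S^* \subseteq V(CG) \setminus \mathcal{C}$, contradicting that $\mathcal{C}$ covers. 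When $a \leq i'$, the simple swap $S^* := (S \setminus \{\beta_{i,k}\}) \cup \{\beta_{i',k'}\}$ works: it is connected of size $r+1$, and $\beta_{i',k'} \notin \mathcal{C}$ because $\mathcal{C} \in B_{i,k}$ and $(i',k') < (i,k)$ coordinate-wise. When $a > i'$, I instead set
\[
S^* := \bigl(S \setminus (\{\beta_{i,k}\} \cup L')\bigr) \cup \{\alpha_{i'}, \alpha_{i'+1}, \ldots, \alpha_{a-1}, \beta_{i',k'}\}
\]
for a subset $L' \subseteq L_S \setminus \{\beta_{i,k}\}$ of size $a - i'$. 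The inserted vertices all lie outside $\mathcal{C}$ because $B_{i,k}$ excludes every $\alpha_j$ and $\beta_{j,s}$ with $j \leq i-1$, and the removed leaves are outside $\mathcal{C}$ by the assumption $\mathcal{C} \cap S = \{\beta_{i,k}\}$.

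The principal obstacle is to ensure that the second construction is feasible, i.e.\ that $|L_S| \geq a - i' + 1$, equivalently $b \leq r + i' - 1$. I plan to deduce this from the cover condition applied to the pure backbone path $P := \{\alpha_1, \alpha_2, \ldots, \alpha_{r+1}\}$, which is a connected $(r+1)$-set whenever $l \geq r+1$. By the definition of $B_{i,k}$ no $\alpha_j$ with $j \leq i$ lies in $\mathcal{C}$, and by $\mathcal{C} \cap S = \{\beta_{i,k}\}$ no $\alpha_j$ with $j \in [a,b]$ lies in $\mathcal{C}$; since $a \leq i$, the union of these intervals is $[1,b]$, so whenever $b \geq r+1$ we would have $\mathcal{C} \cap P = \emptyset$, contradicting that $\mathcal{C}$ covers $P$. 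Hence $b \leq r \leq r + i' - 1$ (using $i' \geq 1$), and when $l < r + 1$ the inequality $b \leq l \leq r$ holds trivially. The construction of $S^*$ therefore succeeds, completing the proof.
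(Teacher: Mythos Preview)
Your proof is correct and follows the same strategy as the paper: reduce to showing that $(\mathcal{C}\setminus\{\beta_{i,k}\})\cup\{\beta_{i',k'}\}$ is a vertex cover of $\con_r(CG)$, and do so by modifying a given hyperedge $S$ into an auxiliary connected $(r+1)$-set to which the cover property of $\mathcal{C}$ applies. The one notable difference is in your case $a>i'$: the paper avoids your multi-leaf removal and the accompanying feasibility bound $b\le r$ by using the single swap $S'=(S\setminus\{\beta_{i,k}\})\cup\{\alpha_{a-1}\}$, which is already connected of size $r+1$ and, since $\alpha_{a-1}\notin\mathcal{C}$, immediately yields $S\cap(\mathcal{C}\setminus\{\beta_{i,k}\})\neq\emptyset$.
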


\begin{proof}
 Let $\mathcal{C}$ be a minimal vertex cover of $\con_r(CG)$ such that $\mathcal{C}\in B_{i,k}$. We need to show that $\mathcal{C}'=(\mathcal{C}\setminus\{\beta_{i,k}\})\cup\{\beta_{i',k'}\}$ is a vertex cover of $\con_r(CG)$. Let $S\subseteq V(CG)$ such that $|S|=r+1$ and the induced subgraph $CG[S]$ is connected. If $\beta_{i',k'}\in S$ then $\mathcal{C}'\cap S\neq\emptyset$. Thus we can assume that $\beta_{i',k'}\notin S$. If $\beta_{i,k}\notin S$, then as $\mathcal{C}\cap S\neq\emptyset$ we also have $\mathcal{C}'\cap S\neq\emptyset$. Thus we can also assume that $\beta_{i,k}\in S$. Hence $\alpha_i\in S$ since $|S|=r+1\ge 2$ and $CG[S]$ is connected. If $\alpha_s\in S$ for each $s\le i$, then we see that $CG[S']$ is a connected graph for $S'=(S\setminus \{\beta_{i,k}\})\cup\{\beta_{i',k'}\}$, where $|S'|=r+1$. In that case $\mathcal{C}\cap S'\neq\emptyset$ and consequently $\mathcal{C}'\cap S\neq\emptyset$. Let $1<q\le i$ be the least positive integer such that $\alpha_q\in S$ and $\alpha_{q-1}\notin S$. In this case consider $S'=(S\setminus\{\beta_{i,k}\})\cup\{\alpha_{q-1}\}$. Since $CG[S]$ is a connected graph we see that $CG[S']$ is also a connected graph such that $|S'|=r+1$. Therefore, $\mathcal{C}\cap S'\neq\emptyset$ and hence $\mathcal{C}'\cap S\neq\emptyset$. Thus $\mathcal{C}'$ is a vertex cover of $\con_r(CG)$.
\end{proof}

The proof of the following lemma is similar to that of \Cref{leaf ideal containment} and hence we omit.

\begin{lemma}\label{main ideal decomposition}
 With the notations as above we have $J_{i'}\subseteq J_i$ for $i\ge i'$.
\end{lemma}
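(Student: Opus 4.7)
The plan is to reduce the lemma to the one-step containment $J_{i'}\subseteq J_{i'+1}$ for each $i'<t$, and then to iterate to obtain $J_{i'}\subseteq J_i$ across the full range $1\le i'\le i\le t$. Mirroring the strategy of \Cref{leaf ideal containment}, given a minimal vertex cover $\mathcal{C}\in A_{i'}$ I would form
\[
\mathcal{C}'=(\mathcal{C}\setminus\{\alpha_{i'}\})\cup\{\alpha_{i'+1}\}.
\]
The goal is to show that $\mathcal{C}'$ is again a vertex cover of $\con_r(CG)$ and that every minimal subcover $\mathcal{C}''\subseteq\mathcal{C}'$ lies in $A_{i'+1}$. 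Granting both, the monomial $\mathbf{x}_{\mathcal{C}''\setminus\{\alpha_{i'+1}\}}$ divides $\mathbf{x}_{\mathcal{C}'\setminus\{\alpha_{i'+1}\}}=\mathbf{x}_{\mathcal{C}\setminus\{\alpha_{i'}\}}$, placing the latter monomial inside $J_{i'+1}$.

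To verify that $\mathcal{C}'$ is a vertex cover, I would take an arbitrary hyperedge $S$ of $\con_r(CG)$ and check $\mathcal{C}'\cap S\neq\emptyset$ by cases. The cases $\alpha_{i'+1}\in S$ and $\alpha_{i'}\notin S$ are immediate and mirror the leaf-case argument verbatim. The only substantive case is $\alpha_{i'}\in S$ with $\alpha_{i'+1}\notin S$; here, connectedness of $CG[S]$ forces $S$ to lie inside the connected component $L_{i'}$ of $\alpha_{i'}$ in $CG\setminus\{\alpha_{i'+1}\}$, namely the subtree spanned by $\alpha_1,\ldots,\alpha_{i'}$ together with their leaves. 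A direct count gives $|L_{i'}|=i'+\sum_{j=1}^{i'}m_j$; combined with the defining identity $r+1=t+\sum_{j=1}^{t}m_j$ and the strict hypothesis $i'<t$, this yields $|L_{i'}|<r+1=|S|$, so the case is vacuous.

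For the extraction of a cover in $A_{i'+1}$, the vertex $\alpha_{i'+1}$ is essential in $\mathcal{C}'$ because $\mathcal{C}'\setminus\{\alpha_{i'+1}\}=\mathcal{C}\setminus\{\alpha_{i'}\}$ fails to be a cover by minimality of $\mathcal{C}$; hence every minimal subcover $\mathcal{C}''$ of $\mathcal{C}'$ contains $\alpha_{i'+1}$. The exclusions of $\alpha_j$ and $\beta_{j,\cdot}$ for $j\le i'$ follow next: those with $j<i'$ are inherited from $\mathcal{C}\in A_{i'}$, the vertex $\alpha_{i'}$ is removed by construction, and each leaf $\beta_{i',k}$ lies only in hyperedges that also contain $\alpha_{i'}\in\mathcal{C}$, making $\beta_{i',k}$ non-essential in the minimal cover $\mathcal{C}$ and hence absent from it. This completes the one-step containment, and iterating yields the lemma. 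The main obstacle I foresee is the combinatorial size estimate used to eliminate the substantive case: one must carefully track the extra leaves $\beta_{t,m_t+1},\ldots,\beta_{t,m_t'}$ of $\alpha_t$, noting that they lie outside $L_{i'}$ precisely because $i'<t$, so that the strict inequality $|L_{i'}|<r+1$ genuinely holds.
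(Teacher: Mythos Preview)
Your proposal is correct and follows precisely the approach the paper indicates (the paper omits the proof, stating only that it is similar to \Cref{leaf ideal containment}): replace $\alpha_{i'}$ by the target internal vertex, verify the result is still a vertex cover via the size bound $|L_{i'}|<r+1$, and extract a minimal subcover in the required $A$-class. Your choice to do this one step at a time and iterate, rather than replacing $\alpha_{i'}$ directly by $\alpha_i$, is a harmless variation---the direct substitution works by the same counting argument since $i\le t$.
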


Continuing with the notations as above, for $1\le i\le t$, let $CG^{(i+1)}$ be the graph obtained from $CG$ by removing the vertices $\{\alpha_u,\beta_{u,v}\mid 1\le u\le i,\,1\le v\}$. Let $I^{(i+1)}=(\mathcal{SR}(\operatorname{Ind}_r(CG^{(i+1)})))^{\vee}$ be the corresponding Alexander dual ideal in $R^{(i+1)}=\mathbb{K}[x_{\theta}\mid  \theta\in V(CG^{(i+1)})]$. Observe that $I^{(i+1)}$ is the dual of the hyper-edge ideal of $\con_r(CG^{(i+1)})$. Then as in \Cref{ideal decomposition}, we can write the ideal 

\begin{align}\label{J_i equation expression}
 I^{(i+1)}=\sum_{j\ge i+1}x_{\alpha_j}\cdot J_{j}^{(i+1)}+\sum_{j\ge i+1}\sum_{k\ge 1}x_{\beta_{j,k}}\cdot J_{j,k}^{(i+1)}
\end{align}
in the polynomial ring $R^{(i+1)}$. Note that for each $\alpha_j$ in \Cref{J_i equation expression}, 
\begin{align}\label{cardinality1}
|\{\alpha_u,\alpha_j,\beta_{u,v}\mid i+1\le u<j,1\le v\}|\le r+1
\end{align}
and also for each $\beta_{j,k}$ in \Cref{J_i equation expression},
\begin{align}\label{cardinalry2}
|\{\alpha_u,\alpha_j,\beta_{u,v},\beta_{j,w}\mid i+1\le u<j,1\le v,1\le w\le k\}|\le r+1.
\end{align}
Moreover, as in  \Cref{ideal decomposition}, $J_j^{(i+1)}$ is generated by the monomials $\mathbf{x}_{\mathcal{C}'}$ such that $\mathcal{C}'\cup\{\alpha_j\}$ is a minimal vertex cover of $\con_r(CG^{(i+1)})$ which does not contain any element of the set $\{\alpha_u,\beta_{u,v}\mid u<j,v\ge 1\}$. Similarly, if $\mathbf{x}_{\mathcal{C}'}$ is a minimal generator of $J_{j,k}^{(i+1)}$ then $\mathcal{C}'\cup\{\beta_{j,k}\}$ is a minimal vertex cover of $\con_r(CG^{(i+1)})$ which does not contain any element of the set $\{\alpha_u,\alpha_j,\beta_{u,v},\beta_{j,w}\mid 1\le u<j,v\ge 1,1\le w\le k-1\}$.

\begin{lemma}\label{Ji decomposition}
 With the notations as above, for $1\le i\le t$ we have
 \begin{align}\label{decomposition of Ji}
  J_i=\sum_{j>t}x_{\alpha_j}\cdot J_{j}^{(i+1)}R+\sum_{j\ge i+1}\sum_{k\ge 1}x_{\beta_{j,k}}\cdot J_{j,k}^{(i+1)}R,
 \end{align}
 where $\alpha_j$ and $\beta_{j,k}$ also satisfy \Cref{cardinality1} and \Cref{cardinalry2}, respectively.
\end{lemma}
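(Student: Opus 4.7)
The plan is to prove the claimed equality of ideals by verifying both inclusions through a correspondence between the minimal vertex covers in $A_i$ (for $\con_r(CG)$) and certain minimal vertex covers of $\con_r(CG^{(i+1)})$. For the forward inclusion, I take $\mathcal{C} \in A_i$ and set $\mathcal{C}' := \mathcal{C} \setminus \{\alpha_i\}$. First, $\mathcal{C}' \subseteq V(CG^{(i+1)})$: the definition of $A_i$ excludes $\alpha_u$ and $\beta_{u,v}$ with $u < i$, while minimality of $\mathcal{C}$ excludes each $\beta_{i,w}$, since its only neighbor $\alpha_i$ already lies in $\mathcal{C}$. Second, $\mathcal{C}'$ is a minimal vertex cover of $\con_r(CG^{(i+1)})$: every edge of $\con_r(CG^{(i+1)})$ is also an edge of $\con_r(CG)$, hence covered by $\mathcal{C}$ and thus by $\mathcal{C}'$ (since $\alpha_i$ is absent); for minimality, any essentiality witness $f \in \con_r(CG)$ for some $\theta \in \mathcal{C}'$ cannot contain $\alpha_i$, and by connectivity of $CG[f]$ it cannot contain $\alpha_u$ or $\beta_{u,v}$ with $u < i$ either, so $f$ is already an edge of $\con_r(CG^{(i+1)})$. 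Applying \Cref{J_i equation expression} then places $\mathcal{C}'$ uniquely into some $A_j^{(i+1)}$ or $B_{j,k}^{(i+1)}$. The constraint $j > t$ in the $A_j^{(i+1)}$-case is then forced: if $i+1 \le j \le t$, any essentiality witness for $\alpha_i$ in $\mathcal{C}$ must avoid $\alpha_j$ and hence lie in the left connected component of $CG - \alpha_j$, whose cardinality is $\sum_{u=1}^{j-1}(m_u+1) \le r - m_t \le r$, contradicting $|f| = r+1$.

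For the reverse inclusion, I start with $\mathcal{C}'$ lying in $A_j^{(i+1)}$ with $j > t$, or in $B_{j,k}^{(i+1)}$ with $j \ge i+1$ and $k \ge 1$, subject to \Cref{cardinality1} or \Cref{cardinalry2}, and set $\mathcal{C} := \{\alpha_i\} \cup \mathcal{C}'$. Any edge $f$ of $\con_r(CG)$ either contains $\alpha_i$ or avoids it; in the latter case $f$ sits in a single component of $CG - \alpha_i$, and since the left component of $CG - \alpha_i$ has cardinality at most $r$, in fact $f \subseteq V(CG^{(i+1)})$, so it is covered by $\mathcal{C}'$. Thus $\mathcal{C}$ is a vertex cover. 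Each $\theta \in \mathcal{C}'$ retains its essentiality witness from $\con_r(CG^{(i+1)}) \subseteq \con_r(CG)$ (the witness omits $\alpha_i$ automatically). For $\alpha_i$ itself I use the edge $E_t := \{\alpha_u, \beta_{u,v} : 1 \le u \le t,\, 1 \le v \le m_u\}$ of $\con_r(CG)$, of size exactly $r+1$ by the definition of $t$; a routine check gives $\mathcal{C}' \cap E_t = \emptyset$ whenever $\mathcal{C}' \in A_j^{(i+1)}$ with $j > t$, or whenever $\mathcal{C}' \in B_{j,k}^{(i+1)}$ with $\beta_{j,k} \notin E_t$ (i.e.\ $j > t$ or $k > m_j$), yielding $\mathcal{C} \cap E_t = \{\alpha_i\}$ and membership in $A_i$ by construction.

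The main obstacle is the remaining subcase in the reverse direction: $\mathcal{C}' \in B_{j,k}^{(i+1)}$ with $\beta_{j,k} \in E_t$ (i.e.\ $j \le t$ and $k \le m_j$). Here $\beta_{j,k} \in \mathcal{C}' \cap E_t$, so $E_t$ no longer witnesses $\alpha_i$ directly, and I would instead construct a replacement edge of the form $(E_t \setminus \{\beta_{j,k}\}) \cup \{w\}$, where $w$ is adjacent in $CG$ to $E_t \setminus \{\beta_{j,k}\}$ and lies outside $\mathcal{C}$; natural candidates are an extra leaf $\beta_{t,m_t+1}$ of $\alpha_t$ (available when $m_t' > m_t$) or the next backbone vertex $\alpha_{t+1}$ (available when $t < l$). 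A case analysis modeled on the replacement-vertex technique used in the proof of \Cref{leaf ideal containment}, combined with the cardinality hypothesis \Cref{cardinalry2} and the minimality of $\mathcal{C}'$ in $\con_r(CG^{(i+1)})$, should ensure a suitable $w$ always exists. This case is where the argument is most delicate and will likely account for the bulk of the work.
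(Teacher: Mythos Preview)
Your forward inclusion is correct and in fact more careful than the paper's: you verify that $\mathcal{C}' = \mathcal{C}\setminus\{\alpha_i\}$ is a \emph{minimal} vertex cover of $\con_r(CG^{(i+1)})$, whereas the paper only checks that it is a vertex cover avoiding $\alpha_j$ for $j\le t$ (which already suffices for the ideal inclusion). Your reverse inclusion is also fine in the cases where $E_t$ directly witnesses the essentiality of $\alpha_i$.

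The gap is precisely where you flag it, and your proposed single-swap fix does not work. The edge $(E_t\setminus\{\beta_{j,k}\})\cup\{w\}$ may still meet $\mathcal{C}'$ at \emph{other} elements of $E_t$: nothing in the definition of $B_{j,k}^{(i+1)}$ forbids $\mathcal{C}'$ from containing $\beta_{j,k'}$ with $k'>k$, or $\beta_{u,v}$ with $j<u\le t$. Concretely, let $CG$ be the path $\alpha_1\alpha_2\alpha_3\alpha_4\alpha_5$ with two extra leaves $\beta_{2,1},\beta_{2,2}$ at $\alpha_2$, and take $r=3$, so $t=2$ and $E_t=\{\alpha_1,\alpha_2,\beta_{2,1},\beta_{2,2}\}$. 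With $i=1$, the set $\mathcal{C}'=\{\beta_{2,1},\beta_{2,2},\alpha_5\}$ is a minimal vertex cover of $\con_3(CG^{(2)})$ lying in $B_{2,1}^{(2)}$. Here $E_t\setminus\{\beta_{2,1}\}$ already contains $\beta_{2,2}\in\mathcal{C}'$, so no choice of $w$ yields an edge disjoint from $\mathcal{C}'$.

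The paper sidesteps this by not anchoring on $E_t$ at all. In the $\beta_{j,k}$ case it builds the uncovered edge directly: let $s$ be the largest index with $\alpha_s\notin\mathcal{C}'$, put $W=\{\alpha_u,\beta_{u,v}: i+1\le u\le s,\,v\ge 1\}$ and $S=W\setminus\mathcal{C}'$, and show $|S|=r$ exactly (if $|S|\ge r+1$ then $S$ contains an edge of $\con_r(CG^{(i+1)})$ missed by $\mathcal{C}'$; if $|S|\le r-1$ then $\beta_{j,k}$ is redundant in $\mathcal{C}'$). Then $S\cup\{\alpha_i\}$ is a connected $(r{+}1)$-set in $CG$ disjoint from $\mathcal{C}'$. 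In the example above this yields $\{\alpha_1,\alpha_2,\alpha_3,\alpha_4\}$. If you want to rescue your approach you would have to remove \emph{all} of $\mathcal{C}'\cap E_t$ from $E_t$ and replace those vertices simultaneously; the paper's $S$-construction is effectively the clean way to do exactly that.
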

\begin{proof}
 Let us simply denote the ideal in the right hand side of \Cref{decomposition of Ji} as $I'$. Notice that $I'$ is generated by the monomials $\mathbf{x}_{\mathcal{C}'}$, where $\mathcal{C}'$ is a minimal vertex cover of $\con_r(CG^{(i+1)})$ which does not contain $\alpha_j$ for $j\le t$. Let $\mathcal{C}$ be a minimal vertex cover of $\con_r(CG)$ such that $\mathcal{C}\in A_i$. By construction $\alpha_j\notin \mathcal{C}$ for $j<i$. Also, if $\alpha_j\in\mathcal{C}$ for some $i<j\le t$, then $\mathcal{C}\setminus\{\alpha_i\}$ is a vertex cover of $\con_r(CG)$, contradiction to the fact that $\mathcal{C}$ is a minimal vertex cover of $\con_r(CG)$. Thus we see that $\mathcal{C}\setminus\{\alpha_i\}$ is a vertex cover of $\con_r(CG^{(i+1)})$. Since $J_i$ is generated by the monomials $\mathbf{x}_{\mathcal C\setminus \{\alpha_i\}}$, we deduce that $J_i\subseteq I'$.
 
 Now let $\mathcal{C}$ be a minimal vertex cover of $\con_r(CG^{(i+1)})$ such that $\alpha_j\in\mathcal{C}$ for some $j>t$, where $\alpha_j$ satisfies \Cref{cardinality1}, and $\alpha_u,\beta_{u,v}\notin\mathcal{C}$ for $i+1\le u<j$ and $v\ge 1$. As $r+1=\sum_{i=1}^t m_i+t$, we see that $\mathcal{C}$ is not a vertex cover of $\con_r(CG)$. Therefore, $\mathcal{C}\cup\{\alpha_i\}$ is minimal vertex cover of $\con_r(CG)$. Hence, $x_{\alpha_j}\cdot J_{j}^{(i+1)}R\subseteq J_i$ for $j>t$. Next suppose that $\mathcal{C}$ is a minimal vertex cover of $\con_r(CG^{(i+1)})$ such that $\beta_{j,k}\in\mathcal{C}$ for some $j\ge i+1,k\ge 1$; where $\beta_{j,k}$ satisfies \Cref{cardinalry2} and $\mathcal{C}$ does not contain any element of the set $\{\alpha_u,\beta_{u,v},\beta_{j,w}\mid i+1\le u\le j,v\ge 1,1\le w\le k-1\}$. We show that $\mathcal{C}$ is not a vertex cover of $\con_r(CG)$. Let $s=\max\{u\mid\alpha_u\in\con_r(CG^{(i+1)})\text{ and }\alpha_u\notin\mathcal{C}\}$. Take $S=W\setminus\mathcal{C}$, where $W=\{\alpha_u,\beta_{u,v}\mid i+1\le u\le s,v\ge 1\}$. If $|S|\ge r+1$, then $CG^{(i+1)}[S]$ is a connected subgraph of $CG^{(i+1)}$ such that $S\cap\mathcal{C}=\emptyset$, a contradiction. If $|S|\le r-1$, then $\mathcal{C}\setminus\{\beta_{j,k}\}$ is a vertex cover of $CG^{(i+1)}$, a contradiction. Thus $|S|=r$. Taking $S'=S\cup\{\alpha_i\}$ we see that $CG[S']$ is a connected subgraph of $CG$ with $|S'|=r+1$ and $S'\cap\mathcal{C}=\emptyset$. Thus $\mathcal{C}$ is not a vertex cover of $\con_r(CG)$. Consequently, $\mathcal{C}\cup\{\alpha_i\}$ is a minimal vertex cover of $\con_r(CG)$. Hence, $x_{\beta_{j,k}}\cdot J_{j,k}^{(i+1)}R\subseteq J_i$ and this completes the proof.
\end{proof}

Let $\widetilde{CG}$ be the graph obtained from $CG$ by removing the vertex $\beta_{1,1}$. Let $\widetilde I=(\mathcal{SR}(\operatorname{Ind}_r(\widetilde{CG})))^{\vee}$ be the corresponding Alexander dual ideal in $\widetilde R=\mathbb{K}[x_{\theta}\mid \theta\in V(\widetilde{CG})]$. Observe that $\widetilde I$ is the dual of the hyper-edge ideal of $\con_r(\widetilde{CG})$. Then as in  \Cref{ideal decomposition}, we can write the ideal 

\begin{align*}
\small{
 \widetilde I=\begin{cases}
 \sum_{j= 1}^tx_{\alpha_j}\cdot \widetilde{J}_j+\sum_{p= 2}^{m_1}x_{\beta_{1,p}}\cdot \widetilde{J}_{1,p}+\sum_{j=2 }^t\sum_{p= 1}^{m_j}x_{\beta_{j,p}}\cdot \widetilde{J}_{j,p}+x_{\beta_{t,m_t+1}}\cdot\widetilde J_{t,m_t+1}&\text{ if }m_{t'}>m_t\\
 \sum_{j= 1}^{t+1}x_{\alpha_j}\cdot \widetilde{J}_j+\sum_{p= 2}^{m_1}x_{\beta_{1,p}}\cdot \widetilde{J}_{1,p}+\sum_{j=2 }^t\sum_{p= 1}^{m_j}x_{\beta_{j,p}}\cdot \widetilde{J}_{j,p}&\text{ if }m_{t'}=m_t
 \end{cases}
 }
\end{align*}
in the polynomial ring $\widetilde R$. Here as in \Cref{ideal decomposition}, $\widetilde{J}_j$ is generated by the monomials $\mathbf{x}_{\mathcal{C}'}$ such that $\mathcal{C}'\cup\{\alpha_j\}$ is a minimal vertex cover of $\con_r(\widetilde{CG})$ which does not contain any element of the set $\{\alpha_u,\beta_{u,v}\mid u<j,v\ge 1\}$. Similarly, if $\mathbf{x}_{\mathcal{C}'}$ is a minimal generator of $\widetilde{J}_{j,p}$, then $\mathcal{C}'\cup\{\beta_{j,p}\}$ is a minimal vertex cover of $\con_r(\widetilde{CG})$ which does not contain any element of the set $\{\alpha_u,\beta_{u,v},\beta_{j,w}\mid 1\le u<j,v\ge 1,1\le w\le p-1\}$.

\begin{lemma}\label{Jik decomposition}
 With the notations as above, for $1\le i\le t$ and $1\le k \le m_i$ we have 
 \begin{align}\label{other expression}
  J_{i,k}=\begin{cases}
    \sum_{p=k+1}^{m_i}x_{\beta_{i,p}}\cdot \widetilde{J}_{i,p}R+\sum_{j= i+1}^{t-1}\sum_{p= 1}^{m_j}x_{\beta_{j,p}}\cdot \widetilde{J}_{j,p}R+\sum_{p=1}^{m_t+1}x_{\beta_{t,p}}\cdot \widetilde{J}_{t,p}R&\text{ if }\, m_{t'}>m_t \\           
           \sum_{p=k+1}^{m_i}x_{\beta_{i,p}}\cdot \widetilde{J}_{i,p}R+\sum_{j= i+1}^{t}\sum_{p=1}^{m_j}x_{\beta_{j,p}}\cdot \widetilde{J}_{j,p}R+x_{\alpha_{t+1}}\cdot \widetilde{J}_{t+1}R&\text{ if }m_{t'}=m_t
     \end{cases}
 \end{align}
In particular, note that the above expression also implies $J_{i,k}\subseteq J_{i',k'}$ for $i\ge i'$ and $k\ge k'$. 
\end{lemma}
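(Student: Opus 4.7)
The plan is to prove the lemma by a double containment argument in the spirit of \Cref{Ji decomposition}, keeping careful track of the single vertex $\beta_{1,1}$ that distinguishes $CG$ from $\widetilde{CG}$. The key observation is that the ``full'' connected $(r+1)$-subset $\{\alpha_u,\beta_{u,v}\mid 1\le u\le t,\ 1\le v\le m_u\}$ of $CG$ uses $\beta_{1,1}$, so upon passing to $\widetilde{CG}$ any connected $(r+1)$-subset anchored at the left end must reach one vertex further to the right---either to the extra leaf $\beta_{t,m_t+1}$ when $m_{t'}>m_t$, or to $\alpha_{t+1}$ when $m_{t'}=m_t$. This is exactly what produces the two cases in the statement.

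For the inclusion $J_{i,k}\subseteq$ RHS, I would take a minimal vertex cover $\mathcal{C}$ of $\con_r(CG)$ with $\mathcal{C}\in B_{i,k}$. Using \Cref{leaf ideal containment}, $\mathcal{C}$ contains no $\alpha_u$ with $u\le t$ and no $\beta_{u,v}$ preceding $\beta_{i,k}$ lexicographically. Let $\theta$ be the next smallest element of $\mathcal{C}$ after $\beta_{i,k}$; I would then show that $\mathcal{C}\setminus\{\beta_{i,k}\}$ is a minimal vertex cover of $\con_r(\widetilde{CG})$ whose earliest element is $\theta$, so that $\mathbf{x}_{\mathcal{C}\setminus\{\beta_{i,k}\}}$ lies in $x_\theta\cdot\widetilde{J}_\theta R$ (where $\widetilde{J}_\theta$ denotes $\widetilde{J}_{j,p}$ if $\theta=\beta_{j,p}$ and $\widetilde{J}_j$ if $\theta=\alpha_j$). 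The content of the case distinction is precisely that the possible values of $\theta$ are exhausted by the indices on the RHS.

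For the reverse inclusion, I would take any minimal vertex cover $\mathcal{C}''\cup\{\theta\}$ of $\con_r(\widetilde{CG})$ arising from one of the summands on the RHS, and show that $\mathcal{C}''\cup\{\theta,\beta_{i,k}\}$ is a minimal vertex cover of $\con_r(CG)$ belonging to $B_{i,k}$. The verification splits into covering connected $(r+1)$-subsets $S\subseteq V(CG)$ with $\beta_{1,1}\notin S$ (which lie in $\widetilde{CG}$ and are already covered by $\mathcal{C}''\cup\{\theta\}$) and those with $\beta_{1,1}\in S$. For the latter, the main obstacle is a swap argument analogous to the one in \Cref{leaf ideal containment}: one must replace $\beta_{1,1}$ by a connectivity-preserving substitute in $\widetilde{CG}$, so that the new $S'$ is still a connected $(r+1)$-subset and is therefore covered by $\mathcal{C}''\cup\{\theta\}$. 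This substitute is $\beta_{t,m_t+1}$ when it exists (case $m_{t'}>m_t$), which is why the summand $x_{\beta_{t,m_t+1}}\cdot\widetilde{J}_{t,m_t+1}R$ appears; otherwise (case $m_{t'}=m_t$) the only available substitute is $\alpha_{t+1}$, explaining the extra $x_{\alpha_{t+1}}\cdot\widetilde{J}_{t+1}R$ term. Minimality of $\mathcal{C}''\cup\{\theta,\beta_{i,k}\}$ follows because removing $\beta_{i,k}$ leaves the original ``full'' $(r+1)$-subset uncovered, while removing any other vertex inherits an uncovered witness from the minimality of $\mathcal{C}''\cup\{\theta\}$ in $\widetilde{CG}$. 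The final assertion $J_{i,k}\subseteq J_{i',k'}$ for $(i,k)\ge(i',k')$ is then immediate from the decomposition, since each generator on the RHS for $(i,k)$ also appears on the RHS for $(i',k')$.
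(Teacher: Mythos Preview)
Your double-containment strategy and the correspondence $\mathcal{C}\leftrightarrow\mathcal{C}\setminus\{\beta_{i,k}\}$ between minimal vertex covers of $\con_r(CG)$ in $B_{i,k}$ and certain minimal vertex covers of $\con_r(\widetilde{CG})$ are exactly what the paper does. The technical difference is that the paper replaces your swap argument by a single cardinality count: for $\mathcal{C}\in B_{i,k}$ (and symmetrically for $\mathcal{C}'$ coming from the RHS) one sets $s=\max\{w:\alpha_w\notin\mathcal{C}\}$, $S=\{\alpha_u,\beta_{u,v}:u\le s\}\setminus\mathcal{C}$, and shows $|S|=r$ by ruling out $|S|\le r-1$ (contradicts minimality) and $|S|\ge r+1$ (contradicts covering). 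That single equality delivers both inclusions at once.

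Your swap in the reverse inclusion has a genuine gap. You propose $\beta_{t,m_t+1}$ (or $\alpha_{t+1}$) as the substitute for $\beta_{1,1}$, but an arbitrary connected $(r+1)$-set $S\ni\beta_{1,1}$ need not contain $\alpha_t$, so that vertex need not be adjacent to $S\setminus\{\beta_{1,1}\}$ and the swap fails to preserve connectivity. You are conflating two distinct roles: $\beta_{t,m_t+1}$ (resp.\ $\alpha_{t+1}$) \emph{is} the extra vertex in the leftmost full $(r+1)$-subset of $\widetilde{CG}$, and that correctly explains why the index range for $\theta$ on the RHS extends to it; but it is \emph{not} the right swap witness for the covering check. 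A correct fix is to note that any such $S$ must already contain $\alpha_i$ (if the largest $q$ with $\alpha_q\in S$ were $<i$, then $|S|\le q+\sum_{u\le q}m_u<t+\sum_{u\le t}m_u=r+1$), so one may swap $\beta_{1,1}$ for $\beta_{i,k}$ itself---which lies outside $\mathcal{C}'$---producing an uncovered $(r+1)$-set in $\widetilde{CG}$, a contradiction. The paper's $|S'|=r$ count achieves the same thing uniformly: it forces any uncovered $S\ni\beta_{1,1}$ to satisfy $S\setminus\{\beta_{1,1}\}=S'$, whence $\beta_{i,k}\in S'\subseteq S$.
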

\begin{proof}
 Let $I''$ denote the ideal in the right hand side of  \Cref{other expression}. Suppose $\mathcal{C}$ is a minimal vertex cover of $\con_r(CG)$ such that $\mathcal{C}\in B_{i,k}$. By construction $\alpha_j\notin\mathcal{C}$ for $j\le t$. Let $s=\max\{w\mid\alpha_w\in\con_r(CG)\text{ and }\alpha_w\notin\mathcal{C}\}$. Take $S=W\setminus\mathcal{C}$, where $W=\{\alpha_u,\beta_{u,v}\mid 1\le u\le s,v\ge 1\}$. If $|S|\le r-1$, then $\mathcal{C}\setminus \{\beta_{i,k}\}$ becomes a vertex cover of $\con_r(CG)$, a contradiction. Moreover, if $|S|\ge r+1$, then $CG[S]$ is a connected subgraph of $CG$ and $\mathcal C\cap S=\emptyset$, again a contradiction. Thus $|S|=r$ and hence $\mathcal{C}\setminus\{\beta_{i,k}\}$ is a minimal vertex cover of $\con_r(\widetilde{CG})$. Therefore, $J_{i,k}\subseteq I''$. Now let $\mathbf{x}_{\mathcal{C}'}$ be a minimal generator of $I''$. Then $\mathcal{C}'$ is a minimal vertex cover of $\con_r(\widetilde{CG})$ such that $\alpha_j\notin \mathcal{C}'$ for $j\le t$. Let $s'=\max\{w\mid\alpha_w\in\con_r(\widetilde{CG})\text{ and }\alpha_w\notin\mathcal{C}'\}$ and $S'=W'\setminus\mathcal{C}'$, where $W'=\{\alpha_u,\beta_{u,v}\mid 1\le u\le s,v\ge 1\}$. Then proceeding as above we see that $|S'|=r$ and hence $\mathcal{C}'\cup\{\beta_{i,k}\}$ is a minimal vertex cover of $\con_r(CG)$. Consequently, $I''\subseteq J_{i,k}$.
\end{proof}

\begin{theorem}\label{main theorem *}
 Let $CG$ be the caterpillar graph on $n$ vertices. Then  $\operatorname{Ind}_r(CG)$ is a vertex decomposable simplicial complex for $r\ge 1$.
\end{theorem}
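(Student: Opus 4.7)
The plan is to apply \Cref{Vertex splittable} to reduce the task to proving that the Alexander dual ideal $I = (\mathcal{SR}(\ind_r(CG)))^{\vee}$ is vertex splittable, and then to proceed by induction on $|V(CG)|$. The base case is $|V(CG)| \le r+1$, in which case $\ind_r(CG)$ is a full simplex and the claim is immediate.

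For the inductive step I take $x_{\alpha_t}$ as the splitting variable. The structural observations stated right after the definitions of $A_i$ and $B_{i,k}$ (that $\alpha_u \notin \mathcal{C}$ for $\mathcal{C} \in A_i$ whenever $i < u \le t$, and for $\mathcal{C} \in B_{i,k}$ whenever $i \le u \le t$) show that $x_{\alpha_t}$ occurs in the minimal generators of $I$ only through the summand $x_{\alpha_t} J_t$. Writing $I = x_{\alpha_t} J_t + I^{\#}$, the minimal generators of the two parts are automatically disjoint, and the required containment $I^{\#} \subseteq J_t$ is immediate from the chain
\[
J_{i,k} \subseteq J_{1,1} \subseteq J_1 \subseteq J_i \subseteq J_t \qquad (i \le t)
\]
supplied by \Cref{leaf and internal}, \Cref{leaf ideal containment}, and \Cref{main ideal decomposition}, together with the elementary $x_v \cdot J \subseteq J$. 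Moreover, \Cref{Ji decomposition} with $i = t$ identifies $J_t$ with the Alexander dual ideal $I^{(t+1)}$ of $\ind_r(CG^{(t+1)})$ for the strictly smaller caterpillar $CG^{(t+1)}$, so $J_t$ is vertex splittable by the inductive hypothesis.

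The main obstacle is establishing vertex splittability of $I^{\#}$. My plan is to iterate the same strategy: split next along $x_{\alpha_{t-1}}$ (each step justified by the analogous structural observation and containment chain), then through $x_{\alpha_{t-2}}, \ldots, x_{\alpha_1}$, and finally through the leaves beginning with $x_{\beta_{1,1}}$, which is legitimate because \Cref{leaf ideal containment} gives $J_{i,k} \subseteq J_{1,1}$ for every valid $(i,k)$. The delicate point lies in continuing past this first leaf split: the partial order $(i,k) \le (i',k') \iff i \le i'$ and $k \le k'$ coming from \Cref{leaf ideal containment} is not total, so incomparable indices appearing in the remaining ideal can break the required containment for a naive linear ordering. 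My plan to bypass this is via \Cref{Jik decomposition}, which rewrites each $J_{i,k}$ as a combination of $\widetilde J$-ideals on the smaller caterpillar $\widetilde{CG} = CG - \beta_{1,1}$; by the inductive hypothesis applied to $\widetilde I$, together with a careful identification of each $J_{i,k}$ as a suitable sub-ideal sitting inside the vertex splitting of $\widetilde I$, one would obtain vertex splittability of the remaining pieces. A further subtlety is that the intermediate $J_i$ for $i < t$ are not themselves Alexander dual ideals of smaller $\ind_r$ complexes but rather ``truncated'' versions of $I^{(i+1)}$, so the cleanest route may be to strengthen the inductive statement — proving vertex splittability uniformly for all such truncations across caterpillars of a given size — and executing that uniform strengthening is the most intricate step of the argument.
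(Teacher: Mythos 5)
Your proposal follows essentially the same route as the paper's proof: reduce via \Cref{Vertex splittable} to vertex splittability of $I$, use the decomposition in \Cref{ideal decomposition}, split through $x_{\alpha_t},\dots,x_{\alpha_1}$ and then through the leaves starting with $x_{\beta_{1,1}}$, and induct on $n$. The two delicate points you isolate are genuine, and the paper resolves them exactly as you predict. First, the paper's induction \emph{is} your ``uniform strengthening'': what is carried through the induction is not merely that the dual ideals of smaller caterpillars are vertex splittable, but that all of their constituent ideals $J_j^{(i+1)}$, $J_{j,k}^{(i+1)}$, $\widetilde{J}_j$, $\widetilde{J}_{j,p}$ are; \Cref{Ji decomposition} and \Cref{Jik decomposition} then exhibit each truncated $J_i$ and each $J_{i,k}$ as a sum of terms $x_v\cdot(\text{such an ideal})$ satisfying the required containments, which closes the induction. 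Second, your worry about the non-total product order on the pairs $(i,k)$ is settled where you say it should be: the explicit formula in \Cref{Jik decomposition} shows that the index sets of the summands defining the $J_{i,k}$ are nested under the lexicographic order (first by $i$, then by $k$), so the $J_{i,k}$ in fact form a chain and the leaves can be split off in lex order; the weaker product-order statement of \Cref{leaf ideal containment} is indeed not what carries the argument. One minor slip in your base case: for $|V(CG)|=r+1$ the complex $\ind_r(CG)$ is the boundary of a simplex rather than a simplex, though it is of course still vertex decomposable.
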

\begin{proof}
 By \Cref{Vertex splittable}, we need to show that $I=(\mathcal{SR}(\operatorname{Ind}_r(CG)))^{\vee}$ is a vertex splittable ideal. Continuing with the notation as above we have the expression of $I$ in  \Cref{ideal decomposition}. Observe that by \Cref{leaf and internal}, \Cref{leaf ideal containment} and \Cref{main ideal decomposition} we just need to show that the ideals $J_i$ and $J_{i,k}$ are vertex splittable. We show this by induction on $n$. Consider the caterpillar graph $CG^{(i+1)}$ and the Alexander dual ideal $I^{(i+1)}$ of the Stanley-Reisner ideal of $r$-independence complex of $CG^{(i+1)}$. Proceeding as in  \Cref{main ideal decomposition},  \Cref{leaf ideal containment} and  \Cref{leaf and internal} we see that $J_{j'}^{(i+1)}\subseteq J_j^{(i+1)}$ for $j\ge j'$, $J_{j,k}^{(i+1)}\subseteq J_{j',k'}^{(i+1)}$ for $j\ge j'$ and $k\ge k'$, and $J_{i+1,1}^{(i+1)}\subseteq J_{i+1}^{(i+1)}$, respectively. Thus by induction hypothesis and by  \Cref{Ji decomposition}, $J_i$ is a vertex splittable ideal for each $i$. Now consider the caterpillar graph $\widetilde{CG}$ and the Alexander dual ideal $\widetilde I$ of the Stanley-Reisner ideal of $r$-independence complex of $\widetilde{CG}$. Again, one can check that $\widetilde{J_{j,p}}\subseteq \widetilde{J_{j',p'}}$ for $j\ge j'$ and $p\ge p'$. Moreover, $\widetilde{J}_{j'}\subseteq \widetilde{J}_j$ for $j\ge j'$ and $\widetilde{J}_{1,2}\subseteq \widetilde{J}_1$. Hence by induction hypothesis and \Cref{Jik decomposition}, $J_{i,k}$ is a vertex splittable ideal. This completes the proof.
\end{proof}

We illustrate the proof of the above theorem with an example below.
\begin{example}
\normalfont
Let $CG$ be the caterpillar graph on the vertex set $\{\alpha_i,\beta_{1,1},\beta_{2,1},\beta_{2,2},\beta_{3,1},\beta_{4,1}\mid 1\le i\le 4\}$. Let $CG^{(2)}=CG\setminus\{\alpha_1,\beta_{1,1}\}$ and $CG^{(3)}=CG\setminus\{\alpha_1,\alpha_2,\beta_{1,1},\beta_{2,1},\beta_{2,2}\}$. Moreover, let $\widetilde{CG}=CG\setminus\{\beta_{1,1}\}$.
 \begin{figure}[ht]
\centering
\begin{tikzpicture}
[scale=.55]
\draw [fill] (0,1) circle [radius=0.1];
\draw [fill] (0,3) circle [radius=0.1];
\draw [fill] (1.5,1) circle [radius=0.1];
\draw [fill] (2.5,1) circle [radius=0.1];
\draw [fill] (2,3) circle [radius=0.1];
\draw [fill] (4,1) circle [radius=0.1];
\draw [fill] (4,3) circle [radius=0.1];
\draw [fill] (6,1) circle [radius=0.1];
\draw [fill] (6,3) circle [radius=0.1];
\node at (0,3.5) {$\alpha_1$};
\node at (2,3.5) {$\alpha_2$};
\node at (4,3.5) {$\alpha_3$};
\node at (6,3.5) {$\alpha_4$};
\node at (0,0.2) {$\beta_{1,1}$};
\node at (1.4,0.2) {$\beta_{2,1}$};
\node at (2.8,0.2) {$\beta_{2,2}$};
\node at (4.3,0.2) {$\beta_{3,1}$};
\node at (6,0.2) {$\beta_{4,1}$};
\draw (0,3)--(4,3)--(4,3)--(6,3);
\draw (0,1)--(0,3);
\draw (1.5,1)--(2,3);
\draw (2.5,1)--(2,3);
\draw (4,1)--(4,3);
\draw (6,1)--(6,3);

\draw [fill] (8,3) circle [radius=0.1];
\draw [fill] (9.5,1) circle [radius=0.1];
\draw [fill] (10.5,1) circle [radius=0.1];
\draw [fill] (10,3) circle [radius=0.1];
\draw [fill] (12,1) circle [radius=0.1];
\draw [fill] (12,3) circle [radius=0.1];
\draw [fill] (14,1) circle [radius=0.1];
\draw [fill] (14,3) circle [radius=0.1];
\node at (8,3.5) {$\alpha_1$};
\node at (10,3.5) {$\alpha_2$};
\node at (12,3.5) {$\alpha_3$};
\node at (14,3.5) {$\alpha_4$};
\node at (9.3,0.2) {$\beta_{2,1}$};
\node at (10.8,0.2) {$\beta_{2,2}$};
\node at (12.3,0.2) {$\beta_{3,1}$};
\node at (14,0.2) {$\beta_{4,1}$};
\draw (8,3)--(12,3)--(12,3)--(14,3);
\draw (9.5,1)--(10,3);
\draw (10.5,1)--(10,3);
\draw (12,1)--(12,3);
\draw (14,1)--(14,3);

\draw [fill] (16.5,1) circle [radius=0.1];
\draw [fill] (17.5,1) circle [radius=0.1];
\draw [fill] (17,3) circle [radius=0.1];
\draw [fill] (19,1) circle [radius=0.1];
\draw [fill] (19,3) circle [radius=0.1];
\draw [fill] (21,1) circle [radius=0.1];
\draw [fill] (21,3) circle [radius=0.1];
\node at (17,3.5) {$\alpha_2$};
\node at (19,3.5) {$\alpha_3$};
\node at (21,3.5) {$\alpha_4$};
\node at (16.4,0.2) {$\beta_{2,1}$};
\node at (17.8,0.2) {$\beta_{2,2}$};
\node at (19.3,0.2) {$\beta_{3,1}$};
\node at (21,0.2) {$\beta_{4,1}$};
\draw (17,3)--(19,3)--(21,3);
\draw (16.5,1)--(17,3);
\draw (17.5,1)--(17,3);
\draw (19,1)--(19,3);
\draw (21,1)--(21,3);

\draw [fill] (23.5,1) circle [radius=0.1];
\draw [fill] (23.5,3) circle [radius=0.1];
\draw [fill] (25.5,1) circle [radius=0.1];
\draw [fill] (25.5,3) circle [radius=0.1];
\node at (23.5,3.5) {$\alpha_3$};
\node at (25.5,3.5) {$\alpha_4$};
\node at (23.6,0.2) {$\beta_{3,1}$};
\node at (25.5,0.2) {$\beta_{4,1}$};
\draw (23.5,3)--(25.5,3);
\draw (23.5,1)--(23.5,3);
\draw (25.5,1)--(25.5,3);
\end{tikzpicture}\label{figure 12465}
\end{figure}

For $r=3$, let $I=(\mathcal{SR}(\operatorname{Ind}_r(CG)))^{\vee}$, $I^{(2)}=(\mathcal{SR}(\operatorname{Ind}_r(CG^{(2)})))^{\vee}$, $I^{(3)}=(\mathcal{SR}(\operatorname{Ind}_r(CG^{(3)})))^{\vee}$ and $\widetilde I=(\mathcal{SR}(\operatorname{Ind}_r(\widetilde{CG})))^{\vee}$. Using Macaulay 2 \cite{MAC2} one can check the following:

$I^{(2)}=\langle x_{\alpha_2}\cdot (x_{\alpha_4},x_{\beta_{3,1}},x_{\beta_{4,1}})\rangle+ I^{(2)}_1$, where $I_1^{(2)}=\langle x_{\beta_{2,1}}\cdot ( x_{\alpha_4}x_{\beta_{2,2}},x_{\alpha_4} x_{\beta_{3,1}},x_{\beta_{2,2}} x_{\beta_{3,1}} x_{\beta_{4,1}})$, $x_{\beta_{2,2}}\cdot(x_{\alpha_4}x_{\beta_{3,1}})$, $x_{\alpha_3}\rangle$; $I^{(3)}=\langle x_{\alpha_3},x_{\beta_{3,1}},x_{\alpha_4},x_{\beta_{4,1}}\rangle$ and
$\widetilde I=\langle x_{\alpha_1}\cdot (x_{\alpha_3},x_{\alpha_4}x_{\beta_{2,1}}x_{\beta_{2,2}},x_{\alpha_4}x_{\beta_{2,1}}x_{\beta_{3,1}}$, $x_{\alpha_4}x_{\beta_{2,2}}x_{\beta_{3,1}},x_{\beta_{2,1}}x_{\beta_{2,2}}x_{\beta_{3,1}}x_{\beta_{4,1}}),x_{\alpha_2}\cdot (x_{\alpha_3},x_{\alpha_4},x_{\beta_{3,1}},  x_{\beta_{4,1}})\rangle+\widetilde I_1+\widetilde I_2$, where $\widetilde I_1=\langle x_{\beta_{2,1}}\cdot (x_{\alpha_3},x_{\alpha_4}x_{\beta_{2,2}}x_{\beta_{3,1}} )\rangle$ and $\widetilde I_2=\langle x_{\beta_{2,2}}\cdot(x_{\alpha_3}) \rangle$. Here for monomials $m,m_1,\ldots,m_r$, the ideal $\langle mm_1,\ldots,mm_r\rangle$ is also denoted by $\langle m(m_1,\ldots,m_r)\rangle$. Now we can check that the ideal
\[
I=x_{\alpha_1}\cdot J_1+x_{\alpha_2}\cdot J_2+x_{\beta_{1,1}}\cdot J_{1,1}+x_{\beta_{2,1}}\cdot J_{2,1},\]
where $J_1=I_1^{(2)}$, $J_2=I^{(3)}$, $J_{1,1}=\widetilde I_1+\widetilde I_2$ and $J_{2,1}=\widetilde I_2$.
\end{example}

One particular example of a caterpillar graph is the path graph $P_n$. 
Thus, the following result is an immediate consequence of \Cref{main theorem *}.
\begin{corollary}
For $r\geq 1$, the $r$-independence complex of $P_n$ is vertex decomposable.
\end{corollary}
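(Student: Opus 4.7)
The plan is essentially a one-line observation: the path graph $P_n$ is a particular example of a caterpillar graph, so the conclusion follows immediately from \Cref{main theorem *}. I would first recall \Cref{def:caterpillar graph}: a caterpillar is a tree in which every vertex lies on a central path or is at distance one from it. For $P_n = v_1 v_2 \cdots v_n$, the path itself serves as the central path and every vertex already lies on it, so the condition is trivially satisfied.

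Concretely, in the notation used to set up the proof of \Cref{main theorem *}, one would take the internal vertices to be $\alpha_i = v_{i+1}$ for $1 \le i \le n-2$ (with the two endpoints $v_1, v_n$ playing the role of leaves $\beta_{1,1}$ and $\beta_{n-2,1}$ attached to the corners of the central path), and set $m_i = 0$ for all intermediate $i$. The remark just before \Cref{figure 122465} in the paper already covers the $m_i = 0$ case, so no additional bookkeeping is required.

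Since $P_n$ is a caterpillar graph and $r \ge 1$, \Cref{main theorem *} directly asserts that $\operatorname{Ind}_r(P_n)$ is vertex decomposable. No separate argument is needed, which is why the statement appears as a corollary rather than a theorem. The only even marginally subtle point is checking that the degenerate cases $n = 1, 2$ (where $\operatorname{Ind}_r(P_n)$ is a simplex) and small $r$ are consistent with the definition of vertex decomposability, but a simplex is vertex decomposable by \Cref{vddef}, so there is nothing to verify. Thus there is no real obstacle; the work has been entirely absorbed into the preceding theorem.
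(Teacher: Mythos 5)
Your proposal is correct and matches the paper's own argument exactly: the paper derives this corollary in one line by noting that $P_n$ is a caterpillar graph and invoking \Cref{main theorem *}. The extra details you supply about identifying the central path and the $m_i=0$ convention are consistent with the remark preceding \Cref{figure 122465} and do not change the substance.
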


Note that caterpillar graphs are special (recursive) trees, in the sense that the induced subgraph obtained by removing a leaf is again a caterpillar graph. 
This recursive property played an important role in the proof of \Cref{main theorem *}.
Using the computer algebra system Macaulay2 \cite{MAC2} we verified that for all non-caterpillar trees with at most 10 vertices the associated $r$-independence complexes are vertex decomposable. 
Based on these calculations we propose the following conjecture. 

\begin{conj}\label{con1}
For any tree $T$ and $r\geq 1$, the complex $\mathrm{Ind}_r(T)$ is vertex decomposable. 
\end{conj}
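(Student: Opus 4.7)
My plan is to attack \Cref{con1} by strong induction on $n=|V(T)|$, working on the algebraic side through \Cref{Vertex splittable} and generalizing the vertex splittable ideal scheme of \Cref{main theorem *}.

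\textbf{Setup and base case.} By \Cref{Vertex splittable} it suffices to prove that $I := (\mathcal{SR}(\mathrm{Ind}_r(T)))^\vee$ is vertex splittable. By \Cref{min gen dual ideal} the minimal generators of $I$ are the monomials $\mathbf{x}_\mathcal{C}$ with $\mathcal{C}$ a minimal vertex cover of $\con_r(T)$. If $n \le r+1$ then every subset of $V(T)$ is $r$-independent, so $\mathrm{Ind}_r(T)$ is either the full $(n-1)$-simplex or its boundary and we are done.

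\textbf{Choice of splitting variable.} For the inductive step I would fix a leaf $v$ lying on a longest path of $T$, and let $u$ be its unique neighbor. Splitting $I$ according to whether $v$ appears in a minimal vertex cover of $\con_r(T)$ gives
\[
I \;=\; x_v\cdot J_v \;+\; J_{\bar v},
\]
where $J_v=\langle \mathbf{x}_{\mathcal{C}\setminus\{v\}}\mid v\in\mathcal{C}\rangle$ and $J_{\bar v}=\langle \mathbf{x}_\mathcal{C}\mid v\notin\mathcal{C}\rangle$, and the generators on the right-hand side are disjoint up to the factor $x_v$. The aim is to verify the three conditions of vertex splittability: $J_{\bar v}\subseteq J_v$, $J_v$ vertex splittable, and $J_{\bar v}$ vertex splittable.

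\textbf{Containment and recursive identification.} The containment $J_{\bar v}\subseteq J_v$ should follow from an exchange argument: since every edge of $\con_r(T)$ meeting $v$ also meets $u$ (because $v$ is a leaf with neighbor $u$), given a minimal cover $\mathcal{C}$ avoiding $v$ one can locate a vertex $w\in\mathcal{C}$ near $u$ such that $(\mathcal{C}\setminus\{w\})\cup\{v\}$ remains a vertex cover; this is modelled on \Cref{leaf and internal}, \Cref{leaf ideal containment} and \Cref{main ideal decomposition} in the caterpillar proof. For the recursive identification, I would expect $J_{\bar v}$ to coincide (up to relabeling) with the Alexander dual of $\mathrm{Ind}_r(T\setminus v)$, so the induction hypothesis applied to the tree $T\setminus v$ settles it. The ideal $J_v$ is more subtle: its generators correspond to minimal covers of $\con_r(T)$ containing $v$, which are essentially minimal covers of a hypergraph on $V(T)\setminus\{v\}$ where $u$ now carries a reduced ``capacity'' (the component through $u$ may only have size up to $r-1$, not $r$). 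To accommodate this in the induction I would prove a more general statement: for every tree $T$ and every function $c:V(T)\to\mathbb Z_{\ge 1}$, the simplicial complex of subsets $F$ with each component of $T[F]$ containing $x$ having at most $c(x)$ vertices is vertex decomposable. This decorated variant is closed under the operations produced by splitting at a leaf, and restores the required ``removing a leaf gives a smaller object of the same kind'' property that \Cref{main theorem *} used for caterpillars.

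\textbf{Main obstacle.} The hardest step is this formalisation of a decorated-tree invariant strong enough to be preserved by the split at $v$ and flexible enough to specialise back to the undecorated statement. In caterpillars the spine imposed a linear order making the analogous ideals $J_i$, $J_{i,k}$ sit in a totally ordered chain (\Cref{Ji decomposition}, \Cref{Jik decomposition}). For a general tree, when $u$ has several non-leaf branches the splitting produces ideals indexed by these branches that are \emph{not} linearly ordered, and the needed inter-branch containments have to be established simultaneously. I expect the cleanest route is to peel off the deepest leaf first (hence the ``longest path'' choice of $v$) and to strengthen the inductive hypothesis to include a pre-described shedding order on the decorated tree, ensuring that each successive split operates on a branch which has itself been reduced to a caterpillar-like state by previous peelings.
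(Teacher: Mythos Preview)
First, note that \Cref{con1} is stated in the paper as an open \emph{conjecture}; the paper does not prove it. The paper establishes vertex decomposability only for caterpillars (\Cref{main theorem *}) and the weaker shellability for all trees (\Cref{cor:higherindshellable}). So there is no ``paper's own proof'' to compare against, and your proposal should be read as an attack on an open problem.

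Unfortunately, the scheme as written breaks at the very first step. Splitting on a \emph{leaf} $v$ does not give the required containment $J_{\bar v}\subseteq J_v$. Take $T=P_4$ on vertices $1\text{--}2\text{--}3\text{--}4$ with $r=1$, and let $v=1$. The minimal vertex covers of $\con_1(P_4)=P_4$ are $\{1,3\}$, $\{2,3\}$, $\{2,4\}$, so $J_v=\langle x_3\rangle$ while $J_{\bar v}=\langle x_2x_3,\,x_2x_4\rangle$. Since $x_2x_4\notin\langle x_3\rangle$, the inclusion $J_{\bar v}\subseteq J_v$ fails. Your proposed exchange argument (``replace some $w\in\mathcal C$ by $v$'') cannot rescue this: for $\mathcal C=\{2,4\}$ neither $\{1,4\}$ nor $\{1,2\}$ is a vertex cover. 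The decorated-capacity generalization you suggest does not help here either, because the failure already occurs before any decoration is introduced. Note by contrast that in the caterpillar proof the first splitting variable is the \emph{internal} corner vertex $\alpha_1$, not a leaf; in the $P_4$ example, splitting on $x_2$ does satisfy $\langle x_1x_3\rangle\subseteq\langle x_3,x_4\rangle$.

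Your identification $J_{\bar v}\cong (\mathcal{SR}(\mathrm{Ind}_r(T\setminus v)))^\vee$ is also not correct in general: a minimal cover of $\con_r(T)$ avoiding $v$ is certainly a cover of $\con_r(T\setminus v)$, but it need not be minimal there, and conversely a minimal cover of $\con_r(T\setminus v)$ need not cover the hyperedges of $\con_r(T)$ passing through $v$. Finally, the real obstacle you name at the end---that for a non-caterpillar tree the ideals arising at an internal vertex with several non-leaf branches are not totally ordered by inclusion---is precisely why the paper stops at caterpillars and states \Cref{con1} as open. Any successful approach will have to confront that branching phenomenon directly; peeling leaves along a longest path does not reduce the problem to a linearly ordered situation once the branching vertex is reached.
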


In \Cref{sec:concluding}, we discuss why the conjecture can not extend to all chordal graphs.

\section{Concluding remarks}\label{sec:concluding}

It is known that the $1$-independence complexes of chordal graphs are vertex decomposable \cite{DE09, Woodroofe09}. In this section we show that this is not the case in general, {\itshape i.e.}, for chordal graph $G$ and $r\geq 2$, $\ind_r(G)$ need not be vertex decomposable.
We begin with the definition of sequentially Cohen-Macaulay complexes. 
\begin{definition}{\cite[Exercise 4.1.5]{wachs07}}\label{def:scm}
\normalfont
Let $\K$ be a simplicial complex.  For $m \in \{1, 2,\dots ,$ ${\rm dim}(\K)\}$, the \emph{pure} $m$-skeleton $\K^{[m]}$ of a simplicial complex $\K$ is the subcomplex generated by all faces of dimension $m$. 
A simplicial complex $\K$ is sequentially Cohen-Macaulay (SCM for short) if and only if $\K^{[m]}$ is Cohen-Macaulay for all $m = 1,\dots, \dim(\K)$. 
\end{definition}

Now we construct our first graph. 
Let $r\geq 2$ and $H_r$ be the graph with $V(H_r) = \{v_1,..,v_r,v_{r+1},...,v_{2r},x_1,x_2\}$ such that the subgraph induced by $V(H_r) \setminus \{x_1,x_2\}$ is isomorphic to the complete graph $K_{2r}$. 
Now add edges such that $x_1$ is adjacent only to $v_1,..,v_r$ and $x_2$ is adjacent only to $v_{r+1},...,v_{2r}$.
It is not hard to see that the graph $H_r$ is chordal. 
One way to see this is that any induced subgraph on 4 vertices not isomorphic to $P_4$ contains a triangle.

\begin{proposition}
With the notations as before the complex $\mathrm{Ind}_{r+1}(H_r)$ is not SCM.  
\end{proposition}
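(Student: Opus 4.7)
My plan is to exhibit a pure skeleton of $\ind_{r+1}(H_r)$ that fails Reisner's criterion, which immediately blocks sequential Cohen--Macaulayness. Set $\mathcal{K}:=\ind_{r+1}(H_r)$.

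The first step is to classify the top-dimensional facets of $\mathcal{K}$ by a short case analysis on $F\cap\{x_1,x_2\}$ for a facet $F$. If $F\cap\{x_1,x_2\}=\emptyset$, then $H_r[F]$ is the clique on $F\subseteq\{v_1,\ldots,v_{2r}\}$, forcing $|F|\le r+1$. If $F$ contains exactly one of $x_1,x_2$, the same argument (combined with the fact that each of $x_1,x_2$ is universal on its half of the clique) yields $|F|\le r+1$. Finally, if $\{x_1,x_2\}\subseteq F$ and $F\cap\{v_1,\ldots,v_{2r}\}$ meets both halves, then $H_r[F]$ is connected of size $|F|$, so again $|F|\le r+1$. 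The only way to reach $|F|=r+2$ is therefore to put the remaining $r$ vertices entirely in one half, giving exactly the two top facets
\[
F_1=\{x_1,x_2,v_1,\ldots,v_r\},\qquad F_2=\{x_1,x_2,v_{r+1},\ldots,v_{2r}\},
\]
both of dimension $r+1$.

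The second step is to reduce to the pure $(r+1)$-skeleton $\mathcal{K}^{[r+1]}=\Delta^{F_1}\cup\Delta^{F_2}$; by the definition of sequentially Cohen--Macaulay, it suffices to show that $\mathcal{K}^{[r+1]}$ is not Cohen--Macaulay. Since $F_1\cap F_2=\{x_1,x_2\}$, the complex $\mathcal{K}^{[r+1]}$ is just two $(r+1)$-simplices glued along a single edge, and a direct computation from the definition of link gives
\[
\lk_{\mathcal{K}^{[r+1]}}(\{x_1,x_2\}) \;=\; \Delta^{\{v_1,\ldots,v_r\}}\sqcup\Delta^{\{v_{r+1},\ldots,v_{2r}\}},
\]
the disjoint union of two $(r-1)$-simplices.

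The last step is an appeal to Reisner's criterion. This link has dimension $r-1$, but its reduced $0$-homology is nonzero because the link is disconnected. Since $r\ge 2$, we have $r-1\ge 1$, so Reisner's vanishing condition is violated and $\mathcal{K}^{[r+1]}$ is not Cohen--Macaulay; hence $\mathcal{K}$ is not sequentially Cohen--Macaulay. Equivalently, the two facets of $\mathcal{K}^{[r+1]}$ share only a codimension-$r$ face, so $\mathcal{K}^{[r+1]}$ fails to be connected in codimension one, which already rules out Cohen--Macaulayness for $r\ge 2$. The only mildly technical step in the whole argument is the facet classification; once $F_1$ and $F_2$ are pinned down, the link computation and the homological obstruction are a single line each.
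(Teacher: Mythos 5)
Your proof is correct and follows essentially the same route as the paper: identify the two faces of cardinality $r+2$, observe that the pure $(r+1)$-skeleton is two $(r+1)$-simplices glued along the edge $\{x_1,x_2\}$, and conclude it is not Cohen--Macaulay (the paper states this last step without invoking Reisner's criterion explicitly, but the argument is the same). Your facet classification and the link computation at $\{x_1,x_2\}$ simply make explicit details the paper leaves to the reader.
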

\begin{proof}
For simplicity, denote $\mathrm{Ind}_{r+1}(H_r)$ by $\K$. 
By definition every $(r+1)$-subset of $V(H_r)$ describes a simplex of $\K$. 
Hence, $\K^{[m]}$ is the $m$-skeleton of the $(2r+2)$-simplex and hence it is Cohen-Macaulay for $m = 1,\dots, r$.
Now, in dimension $r+1$ - there are exactly two subsets of cardinality $r+2$, namely $\{v_1,\dots,v_r,x_1,x_2\}$ and $\{v_{r+1},\dots,v_{2r},x_1,x_2\}$. 
Therefore the complex $\K^{[r+1]}$ is the union of two $(r+1)$-simplices along an edge. Since $r\geq 2$, $\K^{[r+1]}$ is not Cohen-Macaulay implying that the complex $\mathrm{Ind}_{r+1}(H_r)$ is not SCM. 
\end{proof}

It is easy to see that the complex $\ind_{r+1}(H_r)$ has the homotopy type of a wedge of spheres of dimension $r$. 
For our next example we will construct a contractible complex. 


\begin{proposition}
 For $r\geq 2$, let $G_r$ be the graph in \Cref{fig:Gr}. Then $\ind_r(G_r)$ is contractible but not SCM.
\end{proposition}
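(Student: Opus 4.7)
The plan is to split the argument into two independent parts: contractibility of $\K := \ind_r(G_r)$ and failure of sequential Cohen-Macaulayness. I would first read off the maximal $r$-independent sets of $G_r$ from the figure, since both conclusions are governed by how these facets fit together.

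For contractibility, I would look for a vertex $v \in V(G_r)$ that lies in every facet of $\K$. If such a $v$ exists then every face of $\K$ can be enlarged by $v$ while staying $r$-independent, so $\K$ is the simplicial cone on $\lk_\K(v)$ with apex $v$, and hence contractible. To identify $v$, one shows that for every $r$-independent $F \subseteq V(G_r)$ with $v \notin F$, all connected components of $G_r[F \cup \{v\}]$ still have size at most $r$, forcing $v \in F$ whenever $F$ is maximal. Failing this, one can instead exhibit an explicit collapse of $\K$ to a point by removing a free pair, or use a discrete Morse matching in which every cell is matched.

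For the failure of SCM, I would invoke \Cref{def:scm} and pinpoint an integer $m$ with $1 \le m < \dim(\K)$ for which $\K^{[m]}$ is not Cohen-Macaulay. Mirroring the $H_r$ construction, the obstruction should take the form of two $m$-dimensional facets of $\K^{[m]}$ whose intersection has dimension at most $m-2$; Reisner's criterion then detects non-vanishing reduced homology in the link of this intersection, below the link's dimension. Equivalently, $\K^{[m]}$ would decompose as the union of two pure subcomplexes glued along a low-dimensional piece, giving a disconnected link and hence a concrete witness to the non-CM behavior.

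The main obstacle is reconciling global contractibility with non-Cohen-Macaulayness of a pure skeleton, since these properties seem pulled in opposite directions. They coexist precisely because passing from $\K$ to $\K^{[m]}$ discards the higher-dimensional simplices through the cone apex $v$ that were responsible for the contracting homotopy, while leaving the problematic pair of $m$-faces intact. The delicate combinatorial step is therefore to choose $m$ so that the two ``bad'' facets survive in $\K^{[m]}$ but the cone-filling $m$-faces through $v$ do not; once the right $m$ is read off from the structure of $G_r$, the SCM failure reduces to an explicit check on a small, directly tractable pure skeleton.
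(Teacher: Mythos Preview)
Your plan for the SCM failure is essentially the paper's: find two top-dimensional faces meeting in codimension $2$, and use Reisner's criterion on the link of the intersection. In the paper this is carried out with the face $F=\{1,\dots,r-1,r+2,\dots,2r\}$, whose link in the pure top-skeleton is the disjoint union of the edges $\{r,r+1\}$ and $\{a,b\}$. One small correction: here the offending value of $m$ is the \emph{top} dimension $2r-1$, not something strictly below $\dim\K$, so nothing higher-dimensional is being discarded when you pass to $\K^{[m]}$; what is discarded are the lower-dimensional facets of $\K$ that are not contained in any $(2r-1)$-face.

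The contractibility half, however, has a real gap. Your primary idea is to find a vertex $v$ contained in every facet of $\K$ and conclude that $\K$ is a cone. No such vertex exists. For instance, $\{1,\dots,2r\}$ is a facet missing $a$ and $b$; $\{1,\dots,r-1,a,b,r+2,\dots,2r\}$ is a facet missing $r$ and $r+1$; and $\{1,3,\dots,r,a,b,r+2,\dots,2r\}$ is a facet missing $2$ (by symmetry one can knock out any path vertex). So $\K$ is not a cone on any vertex, and your explanatory paragraph about ``cone-filling $m$-faces through $v$'' rests on a false premise. The fallback you mention (explicit collapse or a perfect acyclic matching) would of course work in principle, but as stated it is not a proof.

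The paper's route is a one-step link--deletion argument at the vertex $r+1$: it checks that $\lk_{\K}(r+1)$ is a cone with apex $1$ (the point being that if $F\cup\{r+1\}$ is $r$-independent then the component of $r+1$ cannot reach $2$, so adding $1$ never creates a component of size $>r$). Hence $\lk_\K(r+1)$ is contractible, so $\K\simeq\del_\K(r+1)=\ind_r(G_r\setminus\{r+1\})$, and the latter is the join of $\ind_r$ of the left piece with a full simplex on $\{r+2,\dots,2r\}$, hence contractible. This is exactly the kind of ``one link is a cone, pass to the deletion'' trick that substitutes for a global cone point when none exists; you would need some argument of this sort to complete the contractibility half.
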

\begin{proof}
Observe that the subcomplex $\lk_{\ind_r(G_r)}(r+1)$ is a cone with apex $1$, hence contractible. Therefore, $\ind_r(G_r)\simeq \del_{\ind_r(G_r)}(r+1) = \ind_r(G_r\setminus \{r+1\})$. Clearly, $\ind_r(G_r\setminus \{r+1\})$ is a join of $\ind_r(G_r[\{1,\dots,r,a,b\}])$ with a simplex on vertex set $\{r+2,\dots,2r\}$ implying that $\ind_r(G_r)$ is contractible.

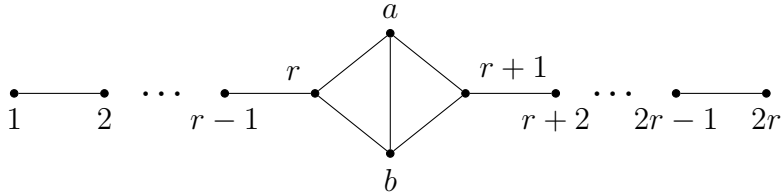
\begin{figure}[H]
		\centering
		\begin{tikzpicture}
 [scale=0.4, vertices/.style={draw, fill=black, circle, inner sep=1.0pt}]
        \node[vertices, label=below: {$1$}] (l1) at (0,0)  {};
		\node[vertices, label=below: {$2$}] (l2) at (3,0)  {};
		\node[vertices, label=below:
		{$r-1$}] (lr1) at (7,0)  {};
		\node[vertices, label=north west:
		{$r$}] (lr) at (10,0)  {};
		
		\node[vertices, label=above:
		{$a$}] (a) at (12.5,2)  {};\node[vertices, label=below:
		{$b$}] (b) at (12.5,-2)  {};
		
		\node[vertices, label=north east:
		{$r+1$}] (r1) at (15,0)  {};
		\node[vertices, label=below: {$r+2$}] (r2) at (18,0)  {};
		\node[vertices, label=below:
		{$2r-1$}] (rr1) at (22,0)  {};
		\node[vertices, label=below:
		{$2r$}] (rr) at (25,0)  {};
		\node at (5,0) {\bf \ldots};
		\node at (20,0) {\bf \ldots};
		
\foreach \to/\from in {l1/l2,lr1/lr,r1/r2,rr1/rr,lr/a,lr/b,a/b,a/r1,b/r1}
\draw [-] (\to)--(\from);
\end{tikzpicture}
\caption{The graph $G_r$}\label{fig:Gr}
	\end{figure}
	
We now prove that the complex $\ind_r(G_r)$ is not SCM by showing that the pure $(2r)$-skeleton of $\ind_r(G_r)$ is not Cohen-Macaulay. For simplicity of notations, denote $(\ind_r(G_r))^{[2r]}$ by $X_{2r}$. Let $F = \{1,\dots,r-1,r+2,\dots,2r\}$. Clearly $F \in \ind_r(G_r)$. 
Observe that $\lk_{X_{2r}}(F)$ is generated by two disjoint facets $\{r,r+1\}$ and $\{a,b\}$ implying that $\tilde{H}_0(\lk_{X_{2r}}(F);\mathbb{Z})\neq 0$. Since $\lk_{X_{2r}}(F)$ is of dimension $1$ with non-trivial homology in dimension $0$, $X_{2r}$ is not Cohen-Macaulay. Hence the complex  $\ind_r(G_r)$ is not sequentially Cohen-Macaulay.
\end{proof}
\begin{remark}
In the graph $G_r$ defined above, we can in fact take any complete graph $K_n$ ($n\geq 2$) in place of $G_r[\{a,b\}] \cong K_2 $ and add edges from vertices $r$ and $r+1$ to every vertex of $K_n$. Readers can verify, using similar arguments as in the previous result, that the $r$-independence complex of the new graph will again be contractible but not SCM. This way we get an infinite family of chordal graphs with contractible but not SCM $r$-independence complexes.
\end{remark}

We end the article with the following problems. 
\begin{enumerate}
    \item Find examples of chordal graphs $G$ and $r\geq 2$ such that $\ind_r(G)$ is SCM but not shellable.
    \item Find examples of chordal graphs $G$ and $r\geq 2$ such that $\ind_r(G)$ is shellable but not vertex decomposable. 
    \item Characterize chordal graphs such that $\ind_r(G)$ is vertex decomposable for all $r$. \item If $k\neq r+1$ then show that $\ind_k(H_r)$ is vertex decompsable. 
    \item Characterize $k$ such that $\ind_k(G_r)$ is vertex decomposable. 
\end{enumerate}

\section*{Acknowledgements}
The authors thank Adam Van Tuyl and Kevin Vander Meulen for their helpful suggestions and insight. We also thank the anonymous referee for the quick and careful reading and giving helpful suggestions. A grant from the Infosys foundation partially supports Priyavrat Deshpande. Amit Roy thanks the Department of Atomic Energy, Government of India, for postdoctoral research fellowship. Anurag Singh is supported by the Start-up Research Grant SRG/2022/000314 from SERB, DST, India.

\bibliographystyle{abbrv}

\end{document}